\tikzstyle{vertex} = [fill,shape=circle,node distance=80pt]
\tikzstyle{edge} = [fill,opacity=.5,fill opacity=.5,line cap=round, line join=round, line width=40pt]
\tikzstyle{elabel} =  [fill,shape=circle,node distance=30pt]
\begin{document}
\title{The Signless Laplacian Matrix of Hypergraphs}
 	
\author[K. Cardoso]{Kau\^e Cardoso} \address{Instituto Federal do Rio Grande do Sul - Campus Feliz, CEP 95770-000, Feliz, RS, Brazil}
\email{\tt kaue.cardoso@feliz.ifrs.edu.br}
 	
\author[V.Trevisan]{Vilmar Trevisan} \address{Instituto de Matem\'atica e Estat\'{\i}stica, UFRGS,  CEP 91509--900, Porto Alegre, RS, Brazil}
\email{\tt trevisan@mat.ufrgs.br} 		


\pdfpagewidth 8.5 in \pdfpageheight 11 in

\newcommand{\h}{\mathcal{H}}
\newcommand{\g}{\mathcal{G}}
\newcommand{\A}{\mathbf{A}}
\newcommand{\B}{\mathbf{B}}
\newcommand{\C}{\mathbf{C}}
\newcommand{\D}{\mathbf{D}}
\newcommand{\M}{\mathbf{M}}
\newcommand{\N}{\mathbf{N}}
\newcommand{\lin}{\mathcal{L}}
\newcommand{\cli}{\mathcal{C}}
\newcommand{\Q}{\mathbf{Q}}
\newcommand{\x}{\mathbf{x}}
\newcommand{\y}{\mathbf{y}}
\newcommand{\z}{\mathbf{z}}
\newcommand{\Ah}{\mathbf{A}(\mathcal{H})}

\theoremstyle{plain}
\newtheorem{Teo}{Theorem}
\newtheorem{Lem}[Teo]{Lemma}
\newtheorem{Pro}[Teo]{Proposition}
\newtheorem{Cor}[Teo]{Corollary}

\theoremstyle{definition}
\newtheorem{Def}{Definition}[section]
\newtheorem{Afi}[Def]{Claim}
\newtheorem{Que}[Def]{Question}
\newtheorem{Exe}[Def]{Example}
\newtheorem{Obs}[Def]{Remark}

\newcommand{\keyword}[1]{\textsf{#1}}

\begin{abstract}
In this paper we define signless Laplacian matrix of a hypergraph and obtain structural properties from its eigenvalues. We generalize several known results for graphs, relating the spectrum of this matrix with structural
parameters of the hypergraph such as the maximum degree, diameter and the chromatic number. In addition, we characterize the complete signless Laplacian spectrum for the class of the power hypergraphs from the spectrum of its base hypergraph.\newline

\noindent \textsc{Keywords.} Spectral hypergraph theory; Signless Laplacian; Power hypergraph.\newline

\noindent \textsc{AMS classification.} 05C65; 05C50; 15A18.
\end{abstract}

\maketitle

\section{Introduction}

The goal of spectral graph theory is to study structural properties of graphs
by means of eigenvalues and eigenvectors of matrices associated with it.
Researchers, motivated by the success of this theory, have studied many
hypergraph matrices aiming to develop a spectral hypergraph theory. See for
example \cite{Banerjee, Feng, Reff20192, Reff2014, Reff2016, Reff2012,
Rodriguez1, Rodriguez2, Rodriguez3}. In 2012 Cooper and Dutle presented a new
approach and, in their paper \cite{Cooper}, the authors proposed the study of
hypergraphs through tensors, causing a revolution in this area and,
consequently, the study of hypergraph from its matrices has been put aside.
Because determining the spectrum of a tensor has a high computational (as
well as theoretical) cost, the application of this theory has its toll.
Therefore, we believe that the study of hypergraphs through matrices remains
important.

Let $\h$ be a hypergraph whose incidence matrix is $\mathbf{B}(\h)$. The
\emph{signless Laplacian matrix} of $\h$ is defined as $\mathbf{Q}(\h) =
\mathbf{B}\mathbf{B}^T$. The main goal of this paper is the study of this
matrix. We say that the eigenvalues of $\Q$ are  the signless Laplacian
eigenvalues of $\h$. The matrix $\Q$ has many interesting properties such as
being symmetric, non-negative, semi-definite positive and irreducible. Thus,
important theorems such as Perron-Frobenius and Rayleigh Principle can be
inherited directly from matrix theory. In this paper, we prove generalizations
of some results that this matrix has in the context of graphs and,
consequently, it is possible to determine structural properties of the
hypergraph from $\Q$. For example, we show that the number of edges of the
hypergraph can be determined from the sum of its signless Laplacian
eigenvalues. We also show that the number of distinct eigenvalues of $\Q$ is
larger than the diameter of the hypergraph. The spectral radius is bounded by
the degrees of the hypergraph and the chromatic number is bounded from  the
spectral radius. We also show how to determine whether a hypergraph is regular
by analyzing its spectral radius, or its principal eigenvector.

One of the most important properties of the signless Laplacian matrix in the
context of spectral graph theory is the relation between the eigenvalue zero
and the existence of bipartite components in the graph. See Proposition 2.1
of \cite{Towards0}. In an attempt to obtain similar results, we study the
signless Laplacian eigenvalue zero of a hypergraph. We establish the
following result.
\begin{Teo}\label{teo:caracZero} Let $\h = (V,E)$ be a
$k$-graph. If $\lambda=0$ is an eigenvalue of $\Q(\h)$, then $\h$ is
partially bipartite.
\end{Teo}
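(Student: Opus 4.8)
The plan is to convert the spectral hypothesis into a single combinatorial equation and then extract the bipartite-type partition from the signs of a null vector. First I would use positive semidefiniteness: since $\Q(\h) = \B\B^T$, any eigenvector $\x \neq 0$ for $\lambda = 0$ satisfies $0 = \x^T \Q(\h)\x = \|\B^T\x\|^2$, so $\B^T\x = 0$. As the row of $\B^T$ indexed by an edge $e$ is the incidence vector of the vertices of $e$, this says precisely that $\sum_{v\in e} x_v = 0$ for every $e \in E$. Thus $\lambda = 0$ being an eigenvalue is equivalent to the existence of a nonzero $\x$ whose entries sum to zero on each edge, and this equation is what the rest of the argument exploits.

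Next, fixing such an $\x$, I would split the vertices according to the sign of their coordinate, setting $V_1 = \{v : x_v > 0\}$, $V_2 = \{v : x_v < 0\}$ and $V_0 = \{v : x_v = 0\}$. The key observation is that no edge can meet exactly one of $V_1, V_2$: if an edge $e$ met $V_1$ but not $V_2$, every term of $\sum_{v \in e} x_v$ would be nonnegative with at least one strictly positive, forcing the sum to be positive, contrary to $\sum_{v\in e} x_v = 0$; the case of $V_2$ without $V_1$ is symmetric. Consequently every edge either is contained in $V_0$ or intersects both $V_1$ and $V_2$, which is exactly the partition exhibiting $\h$ as partially bipartite.

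The one point needing care --- and the main obstacle --- is guaranteeing that both classes $V_1$ and $V_2$ are genuinely nonempty, where isolated vertices create a degenerate case. When $\h$ has no isolated vertex, any nonzero coordinate $x_v$ lives on a vertex contained in some edge, and the vanishing of that edge's sum immediately forces a coordinate of the opposite sign, so $V_1$ and $V_2$ are simultaneously nonempty and the partition above is the required witness. If instead $\h$ has an isolated vertex, it is partially bipartite for the trivial reason that such a vertex lies in no edge and the defining edge condition holds vacuously. Matching this sign-partition to the precise definition of \emph{partially bipartite} stated earlier, and dispatching these degenerate cases, is the only delicate part; the reduction via semidefiniteness and the sign argument itself are routine.
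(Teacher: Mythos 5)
Your proof takes the same route as the paper's: positive semidefiniteness converts the hypothesis into $\B^T\x = 0$, i.e.\ $x(e) = \sum_{v \in e} x_v = 0$ for every edge $e$ (the paper records this as Lemma~\ref{lem:caracZerox}, proved via Proposition~\ref{pro:xqx}), and then the sign partition $V_1 = \{v : x_v > 0\}$, $V_2 = \{v : x_v < 0\}$, $V_0 = \{v : x_v = 0\}$ finishes the argument. In the case where $\h$ has no isolated vertices your write-up is complete, and in fact more careful than the paper's own proof, which never verifies that $V_1$ and $V_2$ are nonempty, as the definition of partially bipartite requires.

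Your dispatch of the remaining case, however, is wrong. If $\h$ has \emph{exactly one} isolated vertex $w$, it is not ``partially bipartite for the trivial reason'': placing $w$ in $V_1$ means no edge meets $V_1$, so every edge must be contained in $V_0$; that pushes every non-isolated vertex into $V_0$ and leaves nothing for $V_2$. So the isolated vertex buys nothing, and one still needs a partial bipartition of the non-isolated part, which need not exist. Concretely, let $\h$ be the complete $3$-graph on $\{1,\dots,5\}$ together with one isolated vertex $w$. Then $\Q(\h)$ has a zero row, so $\lambda = 0$ is an eigenvalue (with eigenvector $e_w$); but $\h$ is not partially bipartite: if $V_1$ and $V_2$ both met $\{1,\dots,5\}$, say $a \in V_1$ and $b \in V_2$, then every pair in $\{1,\dots,5\}\setminus\{a\}$ would have to meet $V_2$ and every pair in $\{1,\dots,5\}\setminus\{b\}$ would have to meet $V_1$, forcing $|V_1 \cap \{1,\dots,5\}| \geq 3$ and $|V_2 \cap \{1,\dots,5\}| \geq 3$, impossible for disjoint sets inside five vertices; and the alternative $V_1 = \{w\}$ fails as just explained. (For $k=2$ the triangle plus an isolated vertex is an even simpler counterexample: the spectrum of $\Q$ is $4,1,1,0$.) So this degenerate case cannot be dispatched --- it is a counterexample to the theorem as literally stated. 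To be fair, this is a defect of the theorem rather than of your strategy alone: the paper's proof has exactly the same hole, since applied to $\x = e_w$ its sign partition has $V_2 = \emptyset$. Both the statement and the two proofs become correct under the implicit assumption that the null vector is not supported on isolated vertices (for instance, when $\h$ has no isolated vertices), which is precisely the case your first argument settles.
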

For the definition of a partially bipartite hypergraph see Section
\ref{sec:strutural-properties}. The converse of Theorem \ref{teo:caracZero}
is not true. For example $\h = (\{1,2,3,4\},\;\{123,124,134,234\})$, has a
partially bipartition $V_1=\{1,2\}$, $V_2=\{3,4\}$ and $V_0=\emptyset$, but
the eigenvalues of $\Q$ are $\rho=9$ and $\lambda = 1$ with multiplicity $3$.
In view of this, we leave here the following question.
\begin{Que}
How to characterize uniform hypergraphs with signless Laplacian eigenvalue zero?	
\end{Que}

As an application of our developed theory, we also study the spectrum of the
signless Laplacian matrix of the class of hypergraphs called power
hypergraphs (see definition in Section \ref{sec:power}). We show how to
construct the whole spectrum of the power hypergraph from the signless
Laplacian eigenvalues of its base hypergraph.

The remaining of the paper is organized as follows. In Section \ref{sec:pre}
we present some basic definitions about hypergraphs and matrices. In Section
\ref{sec:incidence} we study the incidence matrix and exploit some properties
of line and clique multigraphs. In Section \ref{sec:Laplacian} we study the
signless Laplacian matrix, extending many classical results of this matrix to
the context of hypergraphs. In Section \ref{sec:strutural-properties} we
study structural characteristics of a hypergraph such as being regular or
partially bipartite, analyzing its signless Laplacian eigenvalues. In Section
\ref{sec:classical-parameters} we correlate classical and spectral parameters
of a hypergraph, such as chromatic number and diameter, with spectral radius
and number of distinct eigenvalues. In Section~\ref{sec:power} we study the
spectrum of the signless Laplacian matrix of a power hypergraph.

\section{Preliminaries}\label{sec:pre}

In this section, we shall present some basic definitions about hypergraphs
and matrices, as well as terminology, notation and concepts that will be
useful in our proofs. More details about hypergraphs can be found in
\cite{Bretto}.

A \textit{hypergraph} $\h=(V,E)$ is a pair composed by a set of vertices
$V(\h)$ and a set of (hyper)edges $E(\h)\subseteq 2^V$, where $2^V$ is the
power set of $V$. $\h$ is said to be a $k$-\textit{uniform} (or a $k$-graph)
for $k \geq 2$, if all edges have cardinality $k$. Let $\mathcal{H}=(V,E)$
and $\mathcal{H}'=(V',E')$ be hypergraphs, if $V'\subseteq V$ and
$E'\subseteq E$, then $\mathcal{H}'$ is a \textit{subgraph} of $\h$.

The \textit{neighborhood} of a vertex $v\in V(\h)$, denoted by $N(v)$, is the
multi-set formed by all vertices, distinct from $v$, that have some edge in
common with $v$, where the multiplicity of each element in the multi-set is
exactly the number of edges in common with the vertex $v$. The \textit{edge
neighborhood} of a vertex $v\in V$, denoted by $E_{[v]}$, is the set of all
edges that contain $v$. More precisely, $\;E_{[v]}=\{e:\;v\in e\in E\}$.

The \textit{degree} of a vertex $v\in V$, denoted by $d(v)$, is the number of
edges that contain $v$. More precisely, $\;d(v) = |E_{[v]}|$. A hypergraph is
$r$-\textit{regular} if $d(v) = r$ for all $v \in V$. We define the
\textit{maximum}, \textit{minimum} and \textit{average} degrees,
respectively, as \[\Delta(\h) = \max_{v \in V}\{d(v)\}, \quad \delta(\h) =
\min_{v \in V}\{d(v)\}, \quad d(\h) = \frac{1}{n}\sum_{v \in V}d(v).\] When
we are working with more than one hypergraph, we can use the notation
$d_\h(v)$, to avoid ambiguity.

Let $\h$ be a hypergraph. A \textit{walk} of length $l$ is a sequence of
vertices and edges $v_0e_1v_1e_2 \ldots e_lv_l$ where $v_{i-1}$ and $v_i$ are
distinct vertices contained in $e_i$ for each $ i=1,\ldots,l$. The
\textit{distance} between two vertices is the length of the shortest walk
connecting these two vertices. The \textit{diameter} of the hypergraph is the
largest distance between two of its vertices. The hypergraph is
\textit{connected}, if for each pair of vertices $ u, w$ there is a walk
$v_0e_1v_1e_2 \cdots e_lv_l $ where $ u = v_0 $ and $ w = v_l $. Otherwise,
the hypergraph is \textit{disconnected}.

Let $\g$ and $\h$ be $k$-graphs. We define its \textit{union} $\g\cup\h$ as
the $k$-graph, with the sets of vertices  $\;V(\g\cup\h) = V(\g)\cup V(\h)$
and edges $\;E(\g\cup\h) = E(\g)\cup E(\h)$. The \textit{cartesian product}
$\g\times\h$ is the $k$-graph, with the sets of vertices $\;V(\g\times\h) =
V(\g)\times V(\h)$ and edges	$\;E(\g\times\h) = \{\{v\}\times e:\; v \in
V(\g),\; e \in E(\h)\}\cup \{a\times \{u\}:\; u \in V(\h),\; a \in E(\g)\}$.

A \textit{multigraph} is an ordered pair $\g = \left(V, E\right)$, where $V$
is a set of vertices and $E$ is a multi-set of pairs of distinct, unordered
vertices, called edges. Its \textit{adjacency matrix} $\A(\g)$, is the square
matrix of order $|V|$, where $a_{ii}=0$ and if $i \neq j$, then $a_{ij}$ is
the number of edges connecting the vertices $ i $ and $ j $.

Let $\M$ be a square matrix of order $n$. We denote its characteristic
polynomial by $P_\M(\lambda)=\det(\lambda\mathbf{I}_n-\M)$. Its eigenvalues
will be denoted by $\lambda_1(\M)\geq\cdots\geq\lambda_n(\M)$. If $\x$ is an
eigenvector from eigenvalue $\lambda$, then the pair $(\lambda,\x)$ will be
called \textit{eigenpair} of $\M$. The \textit{spectral radius} $\rho(\M)$,
is the largest modulus of an eigenvalue.

\section{Incidence matrix, clique and line multigraphs}\label{sec:incidence}

In this section, we will study the incidence matrix of a hypergraph. More
specifically, we will analyze the relationship of this matrix with two
multigraphs associated with it: the line and clique multigraphs. The results of this section are generalizations of well-known properties of the incidence matrix and line graphs \cite{Line2, Line}.

\begin{Def}	
Let $\h = (V, E)$ be a hypergraph. The \textit{incidence matrix} $\B(\h)$ is defined
as the matrix of order $|V|\times|E|$, where $\;b(v,e) = 1$ if  $v \in e$ and
$\;b(v,e) = 0$ otherwise. Its \textit{matrix of degrees} $\D(\h)$, is a square matrix
of order $|V|$, where $d_{ii} = d(i)$ and if $i \neq j$, then $d_{ij} = 0$.
\end{Def}

The \textit{clique multigraph} $\cli(\h)$, is obtained by transforming the
vertices of $\h$ in its vertices. The number of edges between two vertices of
this multigraph is equal the number of hyperedges containing them in $\h$.
The \textit{line multigraph} $\lin(\h)$, is obtained by transforming the hyperedges of
$\h$ in its vertices. The number of edges between two vertices of this
multigraph is equal the number of vertices in common in the two respective
hyperedges.

\begin{Exe}
The clique and line multigraphs from $\h=(\{1,\ldots,5\},
\;\{123,145,345\})$, are illustrate in Figure \ref{fig:multigrafos}.
		\begin{figure}[!h]		
		\begin{tikzpicture}
		\node[draw,circle,fill=black,label=below:,label=above:\(1\)] (v1) at (0,0) {};
		\node[draw,circle,fill=black,label=below:,label=above:\(2\)] (v2) at (1.5,2) {};
		\node[draw,circle,fill=black,label=below:,label=above:\(3\)] (v3) at (3,0) {};
		\node[draw,circle,fill=black,label=below:,label=above:\(4\)] (v4) at (1,0) {};
		\node[draw,circle,fill=black,label=below:,label=above:\(5\)] (v5) at (2,0) {};
		
		\begin{pgfonlayer}{background}
		\draw[edge,color=gray,line width=20pt] (v1) -- (v2) -- (v3);
		\draw[edge,color=gray,line width=25pt] (v5) -- (v1) -- (v4) -- (v5);
		\draw[edge,color=gray,line width=30pt] (v4) -- (v3) -- (v5) -- (v4);		
		\end{pgfonlayer}				
		\end{tikzpicture}
		\begin{tikzpicture}	
		\node[draw,circle,fill=black,label=below:,label=above:\(1\)] (v1) at (0,0) {};
		\node[draw,circle,fill=black,label=below:,label=above:\(2\)] (v2) at (1.5,2) {};
		\node[draw,circle,fill=black,label=below:,label=above:\(3\)] (v3) at (3,0) {};
		\node[draw,circle,fill=black,label=below:,label=above:\(4\)] (v4) at (1,0) {};
		\node[draw,circle,fill=black,label=below:,label=above:\(5\)] (v5) at (2,0) {};
		
		\path
		(v1) edge node[below]{} (v2)
		(v2) edge node[below]{} (v3)
		(v3) edge [bend left] node[below]{} (v1)
		(v1) edge node[below]{} (v4)
		(v1) edge [bend left] node[below]{} (v5)
		(v4) edge [bend left] node[below]{} (v5)
		(v4) edge [bend right] node[below]{} (v5)
		(v4) edge [bend right] node[below]{} (v3)
		(v3) edge node[below]{} (v5);
		\end{tikzpicture}
		\begin{tikzpicture}	
		\node[draw,circle,fill=white, label=above:\(123\)] (a) at (1.5, 2) {};
		\node[draw,circle,fill=white, label=above:\(145\)] (b) at (0, 0) {};
		\node[draw,circle,fill=white, label=above:\(345\)] (c) at (3, 0) {};

		\path
		(a) edge node[below]{} (b)
		(a) edge node[below]{} (c)
		(b) edge [bend left] node[below]{} (c)
		(b) edge [bend right] node[below]{} (c);
		\end{tikzpicture}
		\caption{Clique $\cli(\h)$ and line $\lin(\h)$ multigraphs.}\label{fig:multigrafos}	
	\end{figure}	
\end{Exe}

Our first result is the following observation. We believe it is worth
mentioning because it opens the possibility of studying hypergraphs from the
spectrum of multigraphs.

\begin{Teo}\label{teo:multigrafo}	
Let $\h$ be a $k$-graph, $\B$  its incidence matrix, $\D$ its degree matrix,
$\A_\lin$ and $\A_\cli$ the adjacency matrices of its line and clique
multigraphs, respectively. Then
	\[\B^T\B = k\mathbf{I}+\A_\lin, \quad \textrm{ and } \quad\; \B\B^T =\D+\A_\cli.\]
\end{Teo}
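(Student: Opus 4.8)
The plan is to prove both identities by directly computing the $(i,j)$ entry of each matrix product from the definition of the incidence matrix $\B$, and then matching these entries against the definitions of $k\mathbf{I}+\A_\lin$ and $\D+\A_\cli$. Since the incidence matrix has rows indexed by vertices and columns indexed by edges with $b(v,e)=1$ exactly when $v\in e$, both products are governed by the same underlying count: how many times a vertex lies in an edge.

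For the first identity, I would observe that $\B^T\B$ is a square matrix of order $|E|$, so its rows and columns are indexed by edges. The $(e,f)$ entry is $\sum_{v\in V} b(v,e)b(v,f)$, which counts precisely the vertices $v$ lying in both $e$ and $f$, i.e. $|e\cap f|$. On the diagonal, $e=f$ gives $|e|=k$ since $\h$ is a $k$-graph, accounting for the $k\mathbf{I}$ term. Off the diagonal, $|e\cap f|$ is exactly the number of common vertices of the two edges, which by the definition of the line multigraph $\lin(\h)$ is the multiplicity of the edge joining the vertices $e$ and $f$ in $\lin(\h)$, i.e. the $(e,f)$ entry of $\A_\lin$. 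This matches $k\mathbf{I}+\A_\lin$ entrywise.

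For the second identity, $\B\B^T$ is a square matrix of order $|V|$ indexed by vertices, and its $(u,w)$ entry is $\sum_{e\in E} b(u,e)b(w,e)$, counting the edges containing both $u$ and $w$. On the diagonal, $u=w$ yields the number of edges containing $u$, which is $d(u)$, giving the $\D$ term. Off the diagonal, the count is the number of hyperedges containing both $u$ and $w$, which by the definition of the clique multigraph $\cli(\h)$ is precisely the multiplicity of the edge $uw$ in $\cli(\h)$, i.e. the $(u,w)$ entry of $\A_\cli$. Hence $\B\B^T=\D+\A_\cli$ entrywise.

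I do not expect any serious obstacle here; the argument is a routine bookkeeping of matrix-product entries, and the only care needed is to verify that the combinatorial quantities appearing in the off-diagonal entries ($|e\cap f|$ in the first case, the number of shared edges in the second) coincide with the edge multiplicities stipulated in the definitions of the line and clique multigraphs. The main thing to state carefully is the indexing convention (vertices versus edges) for each of the two products so that the diagonal and off-diagonal cases are cleanly separated.
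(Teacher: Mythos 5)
Your proposal is correct and follows essentially the same route as the paper: both compute the $(i,j)$ entries of $\B^T\B$ and $\B\B^T$ directly, identify the diagonal entries as $k$ and $d(i)$ respectively, and match the off-diagonal counts (common vertices of two edges, common edges of two vertices) with the multiplicities defining $\A_\lin$ and $\A_\cli$. Your version is slightly more explicit in writing out the sums $\sum_{v} b(v,e)b(v,f)$ and $\sum_{e} b(u,e)b(w,e)$, but the argument is the same.
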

\begin{proof}
Let $\C = \B^T\B$. Note that $c_{ij}$ is the number of vertices in common
between the hyperedges $e_i$ and $e_j$. So, if $i \neq j$, then $c_{ij}$ is
the number of edges between the vertices $i$ and $j$ in the line multigraph
$\lin(\h)$, otherwise $c_{ii} = k$.	Therefore, we conclude $\C =
k\mathbf{I}+\A_\lin$.

Now, let $\mathbf{M} = \B\B^T$. Note that $m_{ij}$ is the number of
hyperedges that contains at the same time the vertices $i$ and $j$. So we
have $m_{ii} = d(i)$ for all $i \in V$, and if $i\neq j$, then $m_{ij}$ is
the number of edges between the vertices $i$ and $j$ in the clique multigraph
$\cli(\h)$. Therefore, we conclude $\mathbf{M} = \D+\A_\cli$.
\end{proof}

\begin{Pro}\label{pro:polinomios}
If $\h$ is a $k$-graph, $r$-regular, with $n$ vertices and $m$ edges, then
	\[P_{\A_\lin}(\lambda)=(\lambda+k)^{m-n}P_{\A_\cli}(\lambda-r+k).\]
\end{Pro}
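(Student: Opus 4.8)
The plan is to reduce everything to the two matrix identities of Theorem \ref{teo:multigrafo} together with the classical fact that $\B^T\B$ and $\B\B^T$ share the same nonzero spectrum. First I would record how the characteristic polynomials of the two adjacency matrices relate to those of the Gram matrices. Since $\A_\lin = \B^T\B - k\mathbf{I}$, a shift of variable gives
\[
P_{\A_\lin}(\lambda) = \det\bigl((\lambda+k)\mathbf{I} - \B^T\B\bigr) = P_{\B^T\B}(\lambda + k).
\]
Because $\h$ is $r$-regular, $\D = r\mathbf{I}$, so $\A_\cli = \B\B^T - r\mathbf{I}$ and, in the same way,
\[
P_{\A_\cli}(\lambda) = P_{\B\B^T}(\lambda + r).
\]

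The heart of the argument is the identity $P_{\B^T\B}(x) = x^{m-n} P_{\B\B^T}(x)$, where $\B$ is of size $n\times m$ with $n = |V|$ and $m = |E|$. I would prove this with the standard block-determinant trick: evaluate
\[
\det\begin{pmatrix} x\mathbf{I}_n & \B \\ \B^T & \mathbf{I}_m \end{pmatrix}
\]
by Schur complement in two ways. Eliminating the $\mathbf{I}_m$ block yields $\det(x\mathbf{I}_n - \B\B^T)$, while eliminating the $x\mathbf{I}_n$ block (for $x \neq 0$) yields $x^{n-m}\det(x\mathbf{I}_m - \B^T\B)$. Equating the two expressions and clearing gives the displayed identity for every $x \neq 0$, hence as a polynomial identity, since the two sides then agree at infinitely many points. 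Conceptually this just encodes that $\B^T\B$ and $\B\B^T$ have the same nonzero eigenvalues with the same multiplicities, the surplus being $m-n$ extra zero eigenvalues on the $\B^T\B$ side.

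Finally I would substitute. Setting $x = \lambda + k$ in the Gram identity and inserting the two shift formulas,
\[
P_{\A_\lin}(\lambda) = P_{\B^T\B}(\lambda+k) = (\lambda+k)^{m-n}P_{\B\B^T}(\lambda+k) = (\lambda+k)^{m-n}P_{\A_\cli}(\lambda - r + k),
\]
which is exactly the claimed formula. The only genuine subtlety is the exponent: writing it as the nonnegative power $(\lambda+k)^{m-n}$ presupposes $m \geq n$; for an $r$-regular $k$-graph the incidence count $nr = mk$ shows $m \geq n$ precisely when $r \geq k$, and in the opposite case the same computation holds after moving the zero-eigenvalue factor to the other side. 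I expect no computational obstacle beyond correctly tracking the two variable shifts (by $+k$ on the line side and $+r$ on the clique side) and the single Schur-complement step that produces the factor $(\lambda+k)^{m-n}$.
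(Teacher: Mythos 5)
Your proposal is correct and follows essentially the same route as the paper: both hinge on the identity $\det(x\mathbf{I}_m - \B^T\B) = x^{m-n}\det(x\mathbf{I}_n - \B\B^T)$, proved by a block-determinant manipulation (you via Schur complements on $\left(\begin{smallmatrix} x\mathbf{I}_n & \B \\ \B^T & \mathbf{I}_m\end{smallmatrix}\right)$, the paper via $\det(UV)=\det(VU)$ for suitable $U,V$ --- the same standard argument in two guises), followed by the identical variable shifts $\A_\lin = \B^T\B - k\mathbf{I}$ and $\A_\cli = \B\B^T - r\mathbf{I}$ from Theorem \ref{teo:multigrafo} and regularity. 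Your closing remark on the sign of $m-n$ (via the incidence count $nr=mk$) is a sound observation that the paper glosses over, but it does not change the substance of the proof.
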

\begin{proof}
Let $\B$ be the incidence matrix of $\h$. Consider the following matrices.
\[U = \left[ \begin{smallmatrix}\lambda\mathbf{I}_n & -\mathbf{B}\\ \mathbf{0} &
\mathbf{I}_m \end{smallmatrix}\right], \;\; V = \left[ \begin{smallmatrix}
\mathbf{I}_n & \mathbf{B}\\ \mathbf{B}^T & \lambda\mathbf{I}_m \end{smallmatrix}
\right] \;\Rightarrow\; UV = \left[ \begin{smallmatrix}\lambda\mathbf{I}_n
-\mathbf{B}\mathbf{B}^T & \mathbf{0}\\ \mathbf{B}^T & \lambda\mathbf{I}_m \end{smallmatrix}
\right], \;\; VU = \left[ \begin{smallmatrix}\lambda\mathbf{I}_n & \mathbf{0}\\
\lambda\mathbf{B}^T & \lambda\mathbf{I}_m - \mathbf{B}^T\mathbf{B}
\end{smallmatrix}\right].\]
We know that $det(VU) = det(UV)$. So,
\begin{equation}\label{teo:transp}
\lambda^ndet(\lambda\mathbf{I}_m - \mathbf{B}^T\mathbf{B}) =
\lambda^{m}\det(\lambda\mathbf{I}_n-\mathbf{B}\mathbf{B}^T).	
\end{equation}	
Thus,
\begin{eqnarray*}
P_{\A_\lin}(\lambda) &=& det(\lambda\mathbf{I}_m - \A_\lin) = det((\lambda+k)\mathbf{I}_m - \B^T\B)\\
					 &=&(\lambda+k)^{m-n}det((\lambda+k)\mathbf{I}_n - \B\B^T)\\
					 &=&  (\lambda+k)^{m-n}det((\lambda+k-r)\mathbf{I}_n - \A_\cli)\\
					 &=& (\lambda+k)^{m-n}P_{\A_\cli}(\lambda-r+k).\\
\end{eqnarray*}
Therefore, the result follows.
\end{proof}

\begin{Lem}\label{lem:graulinha}
Let $\h$ be a $k$-graph and $\lin(\h)$ its line graph. If $u \in
V(\lin(\h))$ is a vertex obtained from the edge $e\in E(\h)$, then
$$d_\lin(u) = \left( \sum_{v \in e} d_\h(v)\right)-k.$$
\end{Lem}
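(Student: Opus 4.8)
The plan is to read off $d_\lin(u)$ as a row sum of the adjacency matrix $\A_\lin$ and then evaluate that row sum using the identity $\B^T\B = k\mathbf{I}+\A_\lin$ already established in Theorem \ref{teo:multigrafo}.

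First I would unwind the definition of degree in the line multigraph. If $u$ is the vertex of $\lin(\h)$ coming from the edge $e\in E(\h)$, then $d_\lin(u)$ counts, \emph{with multiplicity}, all edges of $\lin(\h)$ incident to $u$. Since the adjacency matrix of a multigraph records in $(\A_\lin)_{uf}$ the number of parallel edges joining $u$ and the vertex $f$, we have $d_\lin(u) = \sum_f (\A_\lin)_{uf}$, i.e.\ $d_\lin(u)$ is exactly the $u$-th row sum of $\A_\lin$.

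Next I would compute the corresponding row sum of $\B^T\B$ directly. Writing $\C = \B^T\B$ with $c_{ef} = \sum_v b(v,e)b(v,f)$, the $e$-th row sum is $\sum_f \sum_v b(v,e)b(v,f) = \sum_v b(v,e)\sum_f b(v,f) = \sum_{v\in e} d_\h(v)$, because $\sum_f b(v,f) = d_\h(v)$ and $b(v,e)=1$ precisely when $v\in e$. On the other hand, by Theorem \ref{teo:multigrafo} the $e$-th row sum of $\B^T\B = k\mathbf{I}+\A_\lin$ equals $k + d_\lin(u)$, since the diagonal contributes $k$ and the off-diagonal part contributes the row sum of $\A_\lin$, namely $d_\lin(u)$. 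Equating the two expressions gives $k + d_\lin(u) = \sum_{v\in e} d_\h(v)$, and the claimed formula follows.

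The computation is essentially routine; the only points requiring care are (i) that degrees in the line multigraph must be counted with multiplicity, so that $d_\lin(u)$ really is the row sum of $\A_\lin$ rather than a count of distinct neighbors, and (ii) the use of $k$-uniformity, which enters only at the final step through $|e|=k$ (equivalently, through the diagonal entry $k$ of $\B^T\B$). A purely combinatorial variant avoids the matrix identity altogether: since the multiplicity of the edge from $u$ to the vertex coming from $f$ is $|e\cap f|$, switching the order of summation gives $d_\lin(u) = \sum_{f\neq e} |e\cap f| = \sum_{v\in e}\bigl(d_\h(v)-1\bigr)$, and the $-1$ terms contribute $-|e| = -k$, yielding $\left(\sum_{v\in e} d_\h(v)\right)-k$.
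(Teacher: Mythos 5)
Your proof is correct, but your primary route differs from the paper's. The paper proves the lemma by exactly the direct count you relegate to your closing remark: each $v\in e$ lies in $d_\h(v)-1$ further hyperedges, each contributing one parallel edge at $u$ in $\lin(\h)$, so $d_\lin(u)=\sum_{v\in e}\left(d_\h(v)-1\right)$, which equals $\left(\sum_{v\in e}d_\h(v)\right)-k$ since $|e|=k$. Your main argument instead reads $d_\lin(u)$ as the $u$-th row sum of $\A_\lin$ and evaluates the corresponding row sum of $\B^T\B$ in two ways, invoking the identity $\B^T\B=k\mathbf{I}+\A_\lin$ from Theorem \ref{teo:multigrafo}; this is legitimate, since that theorem is proved before the lemma and independently of it, so there is no circularity. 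What the matrix route buys is that the lemma becomes pure bookkeeping on top of an identity already in hand, and the same row-sum technique immediately gives the companion statement for the clique multigraph via $\B\B^T=\D+\A_\cli$. What the paper's (and your variant's) route buys is brevity and elementarity: a one-line count that needs neither the incidence matrix nor uniformity until the final substitution $|e|=k$. Your attention to the two delicate points --- that line-multigraph degrees must be counted with edge multiplicity, so the degree really is a row sum of $\A_\lin$, and that $k$-uniformity enters only through the diagonal of $\B^T\B$ --- is exactly where an imprecise version of this argument would go wrong, and you handled both correctly.
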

\begin{proof}
Notice that, for each $v \in e$, there exist other $d_\h(v)-1$ hyperedges
containing it. That is, this vertex will generate $d_\h(v)-1$ edges
containing $u$ in $\lin(\h)$. Using the same argument for the other vertices
of $e$, we conclude that the degree of the vertex $u$ in the line multigraph,
must be $\;d_\lin(u) = \sum_{v \in e}\left( d_\h(v) -1\right)$.
\end{proof}

\section{Signless Laplacian matrix}\label{sec:Laplacian}

In this section, we study some properties of the signless Laplacian matrix of
a hypergraph, generalizing important results of this matrix in the context of
spectral graph theory, whose main results may be found in the series of papers
by Cvetkovi\'c, Rowlinson and Simi\'c \cite{Towards0, TowardsI, TowardsII,
TowardsIII}, and references therein.

\begin{Def}
Let $\h$ a hypergraph and $\B$ its incidence matrix. The \textit{signless
Laplacian matrix} is defined as $\Q(\h) = \B\B^T$.
\end{Def}

An \textit{oriented hypergraph} $\mathit{H}=(\h,\sigma)$ is a hypergraph
where for each vertex-edge incidence $(v,e)$ it is given a label $\sigma(v,e)
\in \{+1,-1\}$. In \cite{Reff2012}, Reef and Rusnak define the incidence
matrix of an oriented hypergraph $\mathfrak{B}(\mathit{H})$ by
$\;\mathfrak{b}(v,e) = \sigma(v,e)$ if  $v \in e$ and $\;\mathfrak{b}(v,e) =
0$ otherwise. The \textit{Laplacian matrix} for oriented hypergraphs is
defined as, $\mathfrak{L}(\mathit{H})= \mathfrak{B}\mathfrak{B}^T$. We
observe that if $\sigma(v,e) = 1$  for all vertex-edge incidence $(v,e)$,
then this definition coincides with the our definition of signless Laplacian
matrix.

\begin{Obs}
Let $\h$ be a $k$-graph, and $\Q$ its signless Laplacian matrix. This matrix
has some simple but useful properties, such as being symmetric, non-negative
and positive semi-definite. Further, if $\h$ is connected, then $\Q$ is
irreducible. These properties allow us to conclude directly from matrix
theory the Rayleigh principle and Perron-Frobenius Theorem, stated bellow.
\end{Obs}

\begin{Teo}[Rayleigh principle for hypergraphs]\label{teoray}
Let $\h$ be a $k$-graph. If $\lambda_1$ is the largest eigenvalue of $\Q$,
then
	\[\lambda_1 = \rho(\Q) = \max_{||\x||=1}\{\x^T\Q\x\}\geq 0.\] Further,
the equality is achieved if and only if $\x$ is an eigenvector of
$\lambda_1$.
\end{Teo}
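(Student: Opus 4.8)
The plan is to prove the Rayleigh principle for the signless Laplacian $\Q = \B\B^T$ by invoking the standard spectral theory of real symmetric positive semi-definite matrices, which applies here because the Remark preceding the statement already records that $\Q$ has these properties. Since $\Q$ is symmetric with real entries, by the spectral theorem it admits an orthonormal basis of eigenvectors $\x_1,\ldots,\x_n$ with corresponding eigenvalues $\lambda_1\geq\cdots\geq\lambda_n$. First I would expand an arbitrary unit vector $\x$ in this basis as $\x=\sum_i c_i\x_i$ with $\sum_i c_i^2=1$, and compute the Rayleigh quotient $\x^T\Q\x=\sum_i \lambda_i c_i^2$. Because $\lambda_i\leq\lambda_1$ for every $i$, this is bounded above by $\lambda_1\sum_i c_i^2=\lambda_1$, which establishes $\max_{\|\x\|=1}\{\x^T\Q\x\}=\lambda_1$, the maximum being attained at $\x=\x_1$.

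To see that $\lambda_1=\rho(\Q)$ and that $\lambda_1\geq 0$, I would use positive semi-definiteness directly: for any $\x$ we have $\x^T\Q\x=\x^T\B\B^T\x=\|\B^T\x\|^2\geq 0$, so every eigenvalue satisfies $\lambda_i\geq 0$. In particular all eigenvalues are non-negative reals, hence the largest eigenvalue $\lambda_1$ coincides with the largest modulus $\rho(\Q)$, and $\lambda_1\geq 0$. This simultaneously handles the identification $\lambda_1=\rho(\Q)$ and the inequality at the end of the displayed formula.

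For the equality characterization, the forward direction is immediate from the computation above: if $\x$ is a unit eigenvector for $\lambda_1$ then $\x^T\Q\x=\lambda_1$. For the converse I would argue that if $\x$ is a unit vector attaining $\x^T\Q\x=\lambda_1$, then in the eigenbasis expansion we have $\sum_i \lambda_i c_i^2=\lambda_1\sum_i c_i^2$, so $\sum_i(\lambda_1-\lambda_i)c_i^2=0$; since each term is non-negative, $c_i=0$ whenever $\lambda_i<\lambda_1$, forcing $\x$ to lie in the $\lambda_1$-eigenspace, i.e. $\x$ is an eigenvector of $\lambda_1$. I do not expect a genuine obstacle here, as the argument is the classical variational characterization of the top eigenvalue; the only point requiring a little care is the converse when $\lambda_1$ has multiplicity greater than one, where ``eigenvector of $\lambda_1$'' must be read as any vector in the eigenspace rather than a specific $\x_1$, and the non-negative-term vanishing argument above handles precisely this case.
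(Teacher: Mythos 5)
Your proposal is correct and matches the paper's approach: the paper does not write out a proof at all, but simply observes (in the Remark preceding the statement) that $\Q=\B\B^T$ is real symmetric and positive semi-definite and then declares the Rayleigh principle ``inherited directly from matrix theory,'' which is exactly the classical spectral-theorem/variational argument you have spelled out, including the identification $\lambda_1=\rho(\Q)$ via non-negativity of the eigenvalues and the equality case via the vanishing of the non-negative terms $(\lambda_1-\lambda_i)c_i^2$. Your write-up is simply a fully detailed version of what the paper leaves implicit.
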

\begin{Teo}[Perron-Frobenius Theorem for hypergraphs]\label{perron}
Let $\h$ be a $k$-graph. If $\h$ is connected, then $\rho(\Q)$ is an
algebraically simple eigenvalue, with a positive eigenvector.
\end{Teo}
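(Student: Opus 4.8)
The plan is to treat this statement as a direct consequence of the classical Perron--Frobenius theorem for non-negative irreducible matrices, so that essentially all of the work lies in checking that $\Q$ meets the hypotheses. First I would record that $\Q = \B\B^T$ is non-negative: every entry of $\B$ lies in $\{0,1\}$, hence every entry of $\Q$ is a non-negative integer. This is already observed in the Remark preceding the statement.

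Next I would confirm irreducibility, which is the one structural point that genuinely uses the connectedness hypothesis. By the computation in Theorem \ref{teo:multigrafo}, the off-diagonal entry $q_{ij}$ counts the hyperedges containing both $i$ and $j$, so $q_{ij}>0$ precisely when $i$ and $j$ lie in a common edge. Consequently the directed graph associated with $\Q$ is strongly connected exactly when any two vertices of $\h$ can be joined by a chain of pairwise co-incident vertices, i.e. exactly when $\h$ is connected in the walk sense of Section \ref{sec:pre}. Thus connectedness of $\h$ yields irreducibility of $\Q$, as asserted in the Remark.

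With non-negativity and irreducibility established, the classical Perron--Frobenius theorem applies verbatim: the spectral radius $\rho(\Q)$ is an eigenvalue of $\Q$, it is algebraically simple, and it admits a strictly positive eigenvector. Finally, since $\Q$ is positive semi-definite (again by the Remark), every eigenvalue is non-negative, so $\rho(\Q)$ coincides with the largest eigenvalue $\lambda_1$, and the simple positive Perron eigenpair attaches to $\lambda_1$ itself, giving the claim.

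I do not expect any serious obstacle here; the argument is bookkeeping together with one genuine (if short) step, namely the equivalence between walk-connectedness of $\h$ and irreducibility of $\Q$. The deeper content --- that a non-negative irreducible matrix possesses a simple, strictly positive Perron eigenpair --- is imported directly from matrix theory rather than reproved, exactly in the spirit of the Remark, which flags these properties as letting us ``conclude directly from matrix theory.''
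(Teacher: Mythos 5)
Your proposal is correct and matches the paper's approach exactly: the paper gives no separate proof, relying on the Remark that $\Q=\B\B^T$ is non-negative, symmetric, positive semi-definite, and irreducible for connected $\h$, and then importing the classical Perron--Frobenius theorem. Your write-up simply makes explicit the one step the paper leaves implicit --- that $q_{ij}>0$ exactly when $i$ and $j$ share a hyperedge, so walk-connectedness of $\h$ is equivalent to irreducibility of $\Q$ --- which is a correct and welcome piece of bookkeeping.
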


The vector obtained in the Perron-Frobenius Theorem, normalized under norm
$\ell^2$, is referred to as \emph{principal eigenvector} of $\Q(\h)$. Some
times we will denote $\rho(\Q) = \rho(\h)$.

We finish this section proving some basic properties of the signless
Laplacian matrix for uniform hypergraphs.

\begin{Lem}
Let $\h$ be a $k$-graph and $\Q=(q_{ij})$ its signless Laplacian matrix.
Then, for each $i \in V$, we have $$\sum_{j \in V}q_{ij} = kd(i).$$
\end{Lem}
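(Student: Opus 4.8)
The plan is to compute the row sum of $\Q = \B\B^T$ directly from the entries established in Theorem \ref{teo:multigrafo}, which decomposes $\Q$ as $\D + \A_\cli$. Fix a vertex $i \in V$. The sum $\sum_{j \in V} q_{ij}$ splits naturally into the diagonal contribution $q_{ii}$ and the off-diagonal contributions $\sum_{j \neq i} q_{ij}$. By Theorem \ref{teo:multigrafo} we know $q_{ii} = d(i)$, while for $j \neq i$ the entry $q_{ij}$ equals the number of hyperedges containing both $i$ and $j$; equivalently, $q_{ij}$ is the $(i,j)$ entry of the adjacency matrix $\A_\cli$ of the clique multigraph.

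The key step is to recognize that $\sum_{j \neq i} q_{ij}$ counts, with multiplicity, the total number of (vertex, hyperedge) pairs in which $i$ shares an edge with some other vertex. More precisely, I would reorganize this double count by summing over the edges incident to $i$ rather than over the vertices $j$. Each hyperedge $e$ containing $i$ contributes exactly $k-1$ to the sum $\sum_{j \neq i} q_{ij}$, since $e$ has $k$ vertices and thus pairs $i$ with the remaining $k-1$ vertices of $e$. Because there are exactly $d(i)$ edges incident to $i$, swapping the order of summation gives
\[
\sum_{j \neq i} q_{ij} = \sum_{e \in E_{[i]}} (k-1) = (k-1)\,d(i).
\]
Adding back the diagonal term yields $\sum_{j \in V} q_{ij} = d(i) + (k-1)d(i) = k\,d(i)$, as claimed.

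The only point requiring care is that the off-diagonal entries $q_{ij}$ already encode multiplicity correctly, so that the double count $\sum_{e \in E_{[i]}}(k-1)$ does not over- or under-count when two vertices share several hyperedges; this is precisely the content of the clique-multigraph interpretation, where parallel edges are retained. Since $\h$ is $k$-uniform, every incident edge contributes the same value $k-1$, and no further case analysis is needed. The main obstacle, if any, is purely bookkeeping: ensuring the reindexing from a sum over $j$ to a sum over $e \in E_{[i]}$ is justified by the definition of $q_{ij}$ as a count of common hyperedges. Once that is observed, the computation is immediate.
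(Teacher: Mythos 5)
Your proof is correct and follows essentially the same route as the paper: both decompose $\Q$ via Theorem \ref{teo:multigrafo} into $\D+\A_\cli$, identify $q_{ii}=d(i)$, and evaluate the off-diagonal row sum by counting neighbors with multiplicity, which the paper writes as $\sum_{j\in N(i)}1=(k-1)d(i)$ over the neighbor multiset and you obtain by reindexing over $e\in E_{[i]}$ --- the same double count. No gap; your explicit justification of the multiplicity bookkeeping is just a more detailed phrasing of the paper's one-line computation.
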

\begin{proof}
By the characterization of signless Laplacian matrix of Theorem
\ref{teo:multigrafo}, we have  $$\sum_{j \in V}q_{ij}\, =\, d(i)+\sum_{j \in
N(i)}1\, =\, d(i)+(k-1)d(i)\, =\, kd(i).$$
\end{proof}

\begin{Pro}\label{pro:poliqpolilin}
	If $\h$ is a $k$-graph with $n$ vertices and $m$ edges, then
	$$ P_{\A_\lin}(\lambda) = (\lambda+k)^{m-n}P_\Q(\lambda+k).$$
\end{Pro}
\begin{proof}
	Notice that, by equation (\ref{teo:transp}), we have
	\begin{eqnarray}
	\notag P_{\A_\lin}(\lambda) &=& \det(\lambda \mathbf{I}_m-\A_\lin) = \det((\lambda+k)\mathbf{I}_m-\B\B^T)\\
	 \notag &=& (\lambda+k)^{m-n}\det((\lambda+k)\mathbf{I}_n-\B^T\B) = (\lambda+k)^{m-n}P_\Q(\lambda+k).
	 \end{eqnarray}
	Therefore, the result follows.	
\end{proof}
\begin{Obs}
Here we highlight two interesting consequences of the Proposition
\ref{pro:poliqpolilin}. First, if $\lambda$ is an eigenvalue of $\A_\lin$,
then $\lambda \geq -k$. Second, we see that $\rho(\A_\lin) = \rho(\Q) - k$.
\end{Obs}

\begin{Pro}
	Let $\g$ and $\g'$ be two $k$-graphs. If $\h=\g\cup\g'$, then
	\[Spec(\Q(\h)) = Spec(\Q(\g))\cup Spec(\Q(\g')).\]
\end{Pro}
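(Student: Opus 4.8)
The plan is to show that the union of two $k$-graphs on disjoint-edge structures produces a signless Laplacian matrix whose spectrum is exactly the multiset union of the two constituent spectra. The key observation is that for the union $\h = \g \cup \g'$, the edge set decomposes as $E(\h) = E(\g) \cup E(\g')$, and this induces a natural block structure on the incidence matrix $\B(\h)$. First I would set up the incidence matrix in block form. Writing the vertex set as $V(\h) = V(\g) \cup V(\g')$ and ordering the edges so that those of $\g$ come first, the incidence matrix takes the shape
\[
\B(\h) = \begin{bmatrix} \B(\g) & \mathbf{0} \\ \mathbf{0} & \B(\g') \end{bmatrix},
\]
provided the two graphs share no vertices. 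In the general case where $V(\g)$ and $V(\g')$ may overlap, I would need to be more careful, but since the union identifies common vertices and the edges of each summand live entirely within their own vertex set, each edge column still has nonzero entries only in the rows corresponding to the vertices of the summand it came from.

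The main computation is then to form $\Q(\h) = \B(\h)\B(\h)^T$ and exploit the block structure. When $\g$ and $\g'$ are vertex-disjoint, the product is immediately block-diagonal:
\[
\Q(\h) = \begin{bmatrix} \B(\g)\B(\g)^T & \mathbf{0} \\ \mathbf{0} & \B(\g')\B(\g')^T \end{bmatrix} = \begin{bmatrix} \Q(\g) & \mathbf{0} \\ \mathbf{0} & \Q(\g') \end{bmatrix},
\]
and the spectrum of a block-diagonal matrix is the union of the spectra of the diagonal blocks, so the result follows immediately. I would state this as the clean case and verify that the characteristic polynomial factors as $P_{\Q(\h)}(\lambda) = P_{\Q(\g)}(\lambda)\,P_{\Q(\g')}(\lambda)$, which is precisely the spectral statement we want.

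The hard part will be handling the possibility that $\g$ and $\g'$ share vertices, since then $\Q(\h)$ is not literally block-diagonal and the argument above does not apply verbatim. I expect that the intended reading of union here is the disjoint union (so that $V(\g)\cap V(\g')=\emptyset$), which makes the theorem a direct consequence of the block decomposition; this is consistent with the usual convention when one speaks of the spectrum being a plain multiset union. I would therefore state explicitly that the union is taken to be disjoint (or note that for a disconnected hypergraph the components play the role of $\g$ and $\g'$), and then the proof reduces entirely to the block-diagonal factorization of $\B(\h)$ and the elementary fact that the determinant of a block-diagonal matrix is the product of the determinants of its blocks. No deeper machinery is needed beyond Theorem \ref{teo:multigrafo}'s identification of $\Q$ with $\B\B^T$ and the definition of the union of $k$-graphs given in the preliminaries.
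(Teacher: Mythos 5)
Your proposal is correct and takes essentially the same route as the paper's proof: enumerate the vertices of $\g$ first and those of $\g'$ second, observe that $\Q(\h)$ is then block-diagonal with blocks $\Q(\g)$ and $\Q(\g')$ (you derive this from the block form of $\B(\h)$, the paper asserts it directly), and factor the characteristic polynomial. Your caveat about overlapping vertex sets is well taken and applies equally to the paper's own argument, which implicitly assumes $V(\g)\cap V(\g')=\emptyset$; indeed the statement fails otherwise, e.g.\ for two $2$-graphs each consisting of a single edge and sharing one vertex, whose union $P_3$ has signless Laplacian spectrum $\{3,1,0\}$ rather than $\{2,2,0,0\}$.
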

\begin{proof}
	If we first enumerate all the vertices in $ \g $ and then the vertices in $ \g' $, the signless Laplacian matrix of $ \h $, will have the following form $\;\Q(\h) = \left(\begin{smallmatrix}\Q(\g)&\mathbf{0}\\\mathbf{0}&\Q(\g')\end{smallmatrix} \right)$.
	Thus, $det(\lambda \mathbf{I}-\Q(\h))=det(\lambda \mathbf{I}-\Q(\g))det(\lambda \mathbf{I}-\Q(\g'))$.
	So, the result follows.
\end{proof}

We now introduce the following notation. Let $\h=(V,E)$ be a hypergraph, for each non-empty subset of
vertices $\alpha = \{v_1,\ldots,v_t\} \subset V$, given a vector $\x=(x_i)$ of dimension $n=|V|$, we denote $x(\alpha)=x_{v_1}+\cdots+x_{v_t}$. Under these conditions we can write,
\[(\Q\x)_u = d(u)x_u+\sum_{w \in N(u)}x_w = \sum_{e \in E_{[u]}}x(e).\]

\begin{Pro}
If $\g$ and $\h$ are two $k$-graphs, with signless Laplacian eigenvalues
$\mu$ of multiplicity $m_1$ and  $\lambda$ of multiplicity $m_2$
respectively, then $\mu+\lambda$ is an eigenvalue of $\Q(\g\times\h)$, with
multiplicity $m_1\cdot m_2$.
\end{Pro}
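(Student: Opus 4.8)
The plan is to recognise $\Q(\g\times\h)$ as a \emph{Kronecker sum} of the two factor matrices and then read off the spectrum from the standard behaviour of such sums. Indexing the vertices of $\g\times\h$ by pairs $(a,b)\in V(\g)\times V(\h)$, I would first prove the identity
\[\Q(\g\times\h) = \Q(\g)\otimes\mathbf{I}_{|V(\h)|} + \mathbf{I}_{|V(\g)|}\otimes\Q(\h).\]

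To establish this I would compare the action of both sides on an arbitrary vector $\x=(x_{(a,b)})$, using the operator formula $(\Q\x)_u=\sum_{e\in E_{[u]}}x(e)$ recorded above. Fix a vertex $(a,b)$. By the definition of the Cartesian product, the edges of $\g\times\h$ containing $(a,b)$ split into two families: the $\h$-fibres $\{a\}\times e$ with $e\in E_{[b]}(\h)$, and the $\g$-fibres $f\times\{b\}$ with $f\in E_{[a]}(\g)$. Summing $x(\cdot)$ over the first family gives $\sum_{e\in E_{[b]}(\h)}\sum_{w\in e}x_{(a,w)}$, which is exactly $\Q(\h)$ applied to the slice $w\mapsto x_{(a,w)}$, i.e.\ the $(a,b)$-entry of $(\mathbf{I}\otimes\Q(\h))\x$; summing over the second family gives $\sum_{f\in E_{[a]}(\g)}\sum_{c\in f}x_{(c,b)}$, the $(a,b)$-entry of $(\Q(\g)\otimes\mathbf{I})\x$. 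In particular the diagonal contribution is $d_\g(a)+d_\h(b)$, as it must be, since $(a,b)$ has precisely this degree in $\g\times\h$. Adding the two families yields the displayed identity.

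The eigenvalue statement then follows from the mixed-product rule $(A\otimes B)(\mathbf{u}\otimes\mathbf{w})=(A\mathbf{u})\otimes(B\mathbf{w})$. If $\Q(\g)\mathbf{u}=\mu\mathbf{u}$ and $\Q(\h)\mathbf{w}=\lambda\mathbf{w}$, then
\[\bigl(\Q(\g)\otimes\mathbf{I}+\mathbf{I}\otimes\Q(\h)\bigr)(\mathbf{u}\otimes\mathbf{w}) = \mu\,(\mathbf{u}\otimes\mathbf{w})+\lambda\,(\mathbf{u}\otimes\mathbf{w})=(\mu+\lambda)(\mathbf{u}\otimes\mathbf{w}),\]
so $\mu+\lambda$ is an eigenvalue of $\Q(\g\times\h)$. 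For the multiplicity I would use that $\Q(\g)$ and $\Q(\h)$ are real symmetric, hence admit orthonormal eigenbases $\{\mathbf{u}_i\}$ and $\{\mathbf{w}_j\}$. The $|V(\g)|\cdot|V(\h)|$ products $\mathbf{u}_i\otimes\mathbf{w}_j$ are orthonormal and form an eigenbasis of the Kronecker sum, with $\mathbf{u}_i\otimes\mathbf{w}_j$ attached to $\mu_i+\lambda_j$. Restricting to the $m_1$ eigenvectors for $\mu$ and the $m_2$ eigenvectors for $\lambda$ produces $m_1\cdot m_2$ orthonormal eigenvectors for $\mu+\lambda$, which is the claimed multiplicity contributed by this pair (and this is how the full spectrum of $\Q(\g\times\h)$ decomposes into sums of factor eigenvalues).

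The main obstacle is the first step: carefully matching the two edge families of the Cartesian product to the two Kronecker summands with the correct index conventions, and verifying that both the off-diagonal and the diagonal (degree) contributions split exactly as $\Q(\g)\otimes\mathbf{I}+\mathbf{I}\otimes\Q(\h)$. Once that identity is secured, the spectral conclusion is routine linear algebra on Kronecker sums.
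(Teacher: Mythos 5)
Your proof is correct, and it is a more structured (and more complete) version of the paper's argument. The paper works directly at the vector level: it takes eigenvectors $\y$ of $\Q(\g)$ and $\x$ of $\Q(\h)$, defines $z_{(v,u)}=y_vx_u$ (which is precisely your $\y\otimes\x$), and verifies the eigenvalue equation through the operator formula $(\Q\z)_{(v,u)}=\sum_{\alpha\in E_{[(v,u)]}}\z(\alpha)$, splitting the edges at $(v,u)$ into the two fibre families exactly as you do --- so the combinatorial heart of the two arguments is identical. What you do differently is to package that edge-splitting once and for all as the matrix identity $\Q(\g\times\h)=\Q(\g)\otimes\mathbf{I}+\mathbf{I}\otimes\Q(\h)$ and then invoke standard Kronecker-sum linear algebra. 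This buys you two things: first, the whole spectrum of $\Q(\g\times\h)$ at once, not just the existence of the eigenvalue $\mu+\lambda$; second, an actual proof of the multiplicity claim, since your orthonormal products $\mathbf{u}_i\otimes\mathbf{w}_j$ are manifestly linearly independent, whereas the paper constructs one product eigenvector per pair of factor eigenvectors and never argues that the resulting $m_1\cdot m_2$ vectors are independent, so its treatment of the multiplicity is, strictly speaking, incomplete. One caveat applies to both arguments and to the statement itself: what is really shown is that the multiplicity of $\mu+\lambda$ is \emph{at least} $m_1\cdot m_2$; it can be larger when distinct pairs of factor eigenvalues have the same sum, a point you correctly flag by calling it the multiplicity ``contributed by this pair.''
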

\begin{proof}
Suppose $\x$ an eigenvector of $\lambda$ in $\Q(\h)$ and  $\y$ an eigenvector
of $\mu$ in $\Q(\g)$. Consider $(v,u)$ a vertex of $\g\times\h$, define a
vector $\mathbf{z}$ by $z_{(v,u)} = y_vx_u$. Thus,
	\[(\Q\mathbf{z})_{(v,u)}=\!\!\!\sum_{\alpha \in E_{[(v,u)]}}\!\!\!z(\alpha) = \sum_{e \in E_{[u]}}y_vx(e) + \sum_{a \in E_{[v]}}y(e)x_u = \lambda y_vx_u + \mu y_vx_u = (\mu+\lambda)z_{(v,u)}.\]
	Therefore, the result is true.
\end{proof}

The result bellow, may be seen as a corollary of Proposition 4.4 of \cite{Reff2014}.
\begin{Pro}\label{pro:xqx}
Let $\h$ be a $k$-graph with $n$ vertices. For each vector $\x\in\mathbb{R}^n$, we have
	\[\x^T\Q\x=\sum_{e \in E}\left[ x(e)\right]^2.\]
\end{Pro}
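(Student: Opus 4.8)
The plan is to compute $\x^T\Q\x$ directly using the factorization $\Q = \B\B^T$ established in the definition of the signless Laplacian matrix. The key observation is that $\x^T\Q\x = \x^T\B\B^T\x = (\B^T\x)^T(\B^T\x) = \|\B^T\x\|^2$. So the quadratic form is just the squared Euclidean norm of the vector $\B^T\x$, and it suffices to identify the entries of $\B^T\x$.

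First I would examine $\B^T\x$ componentwise. The matrix $\B$ has order $|V|\times|E|$ with $b(v,e)=1$ precisely when $v\in e$, so $\B^T$ has order $|E|\times|V|$, and its rows are indexed by edges. For an edge $e\in E$, the corresponding entry of $\B^T\x$ is $(\B^T\x)_e = \sum_{v\in V} b(v,e)\,x_v = \sum_{v\in e} x_v = x(e)$, using the notation $x(e)$ introduced in the excerpt for the sum of the coordinates of $\x$ over the vertices in $e$. Thus each coordinate of $\B^T\x$, indexed by an edge, equals exactly $x(e)$.

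Then I would assemble the squared norm: since $\x^T\Q\x = \|\B^T\x\|^2 = \sum_{e\in E}\left[(\B^T\x)_e\right]^2 = \sum_{e\in E}\left[x(e)\right]^2$, which is precisely the claimed identity. This completes the argument.

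I do not expect any genuine obstacle here; the statement is essentially a repackaging of the factorization $\Q=\B\B^T$ together with the definition of the incidence matrix. The only point requiring a little care is bookkeeping the index sets correctly, namely recognizing that the rows of $\B^T$ are indexed by edges and that the dot product of $\B^T\x$ with itself naturally groups the terms edge by edge. An alternative, more computational route would be to expand $\x^T\Q\x = \sum_{i,j} q_{ij}x_ix_j$ using the entrywise description $\Q=\D+\A_\cli$ from Theorem \ref{teo:multigrafo}, and then reorganize the diagonal and off-diagonal contributions into a sum over edges; but this merely reconstructs the same identity with more effort, so the factorization approach is cleaner and is the one I would present.
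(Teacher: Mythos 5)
Your proof is correct and follows exactly the same route as the paper: factor $\x^T\Q\x$ as $(\B^T\x)^T(\B^T\x)$ and observe that $(\B^T\x)_e = x(e)$ for each edge $e$. The paper's proof is just a terser version of what you wrote, so there is nothing to add.
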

\begin{proof}
Notice that, for each edge $e\in E$ is true that $(\B^T\x)_e = x(e)$.
Therefore,
	\[\x^T\Q\x = \x^T(\B\B^T)\x = (\B^T\x)^T(\B^T\x) = \sum_{e \in E}\left[x(e)\right]^2.\]
\end{proof}

\begin{Pro}
Let $\h$ be a connected $k$-graph. If $\h'$ is a subgraph of $\h$, then
$$\rho(\h') \leq \rho(\h).$$
\end{Pro}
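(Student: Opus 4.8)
The plan is to derive the inequality from the variational characterization of the spectral radius in Theorem \ref{teoray}, combined with the edgewise expression for the quadratic form given in Proposition \ref{pro:xqx}. Since $\h'$ is a subgraph of the $k$-graph $\h$, we have $V(\h') \subseteq V(\h)$ and $E(\h') \subseteq E(\h)$; in particular every edge of $\h'$ still has cardinality $k$ and all of its vertices lie in $V(\h')$, a fact I will use to transport test vectors between the two hypergraphs.

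First I would fix a unit eigenvector $\x'$ associated with $\rho(\h')$, so that $\x'^T \Q(\h') \x' = \rho(\h')$ and $\|\x'\| = 1$ by Theorem \ref{teoray}. Next I would define its zero-extension $\x \in \mathbb{R}^{|V(\h)|}$ by $x_v = x'_v$ for $v \in V(\h')$ and $x_v = 0$ for $v \in V(\h) \setminus V(\h')$. Adding zero coordinates does not change the Euclidean norm, so $\|\x\| = 1$ as well, and $\x$ is an admissible test vector for the Rayleigh quotient of $\Q(\h)$.

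The comparison then runs as follows. For every edge $e \in E(\h')$ all vertices of $e$ belong to $V(\h')$, hence $x(e) = \sum_{v \in e} x_v = \sum_{v \in e} x'_v = x'(e)$. Applying Proposition \ref{pro:xqx} to both hypergraphs and using $E(\h') \subseteq E(\h)$ together with the nonnegativity of each summand, I would estimate
\[\rho(\h) \ge \x^T \Q(\h)\x = \sum_{e \in E(\h)} [x(e)]^2 \ge \sum_{e \in E(\h')} [x(e)]^2 = \sum_{e \in E(\h')} [x'(e)]^2 = \x'^T\Q(\h')\x' = \rho(\h'),\]
where the first inequality is the Rayleigh principle of Theorem \ref{teoray}. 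This chain yields $\rho(\h') \le \rho(\h)$.

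As for obstacles, this argument is essentially routine once the two earlier results are in place; the only point demanding care is the bookkeeping in the zero-extension step — one must verify that each edge of the subgraph is entirely contained in $V(\h')$ so that $x(e) = x'(e)$ holds exactly, and that dropping the (nonnegative) contributions of the edges in $E(\h) \setminus E(\h')$ can only decrease the sum. Connectedness of $\h$ is not actually needed for the inequality itself; it merely guarantees, via Theorem \ref{perron}, that $\rho(\h)$ is a simple eigenvalue with a positive eigenvector, which would be the natural starting point if one additionally wished to analyze when equality holds.
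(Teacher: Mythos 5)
Your proposal is correct and follows essentially the same route as the paper: zero-extend the principal eigenvector of $\Q(\h')$ to $V(\h)$, then compare Rayleigh quotients via the edgewise formula $\x^T\Q\x=\sum_{e\in E}[x(e)]^2$. In fact your write-up is slightly more careful than the paper's, which asserts $\sum_{e \in E(\h)} [\overline{x}(e)]^2 = \sum_{e \in E(\h')} [x(e)]^2$ as an equality even though edges of $\h$ outside $\h'$ may meet $V(\h')$ and contribute nonnegative terms; your version correctly states this step as the inequality $\geq$, which is all that is needed.
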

\begin{proof}
Let $\x$ be the principal eigenvector of $\Q(\h')$. Define a new vector
$\overline{\x}$ of dimension $n=|V(\h)|$, by $\overline{x}_i=x_i$ if $i \in
V(\h')$ and $\overline{x}_i=0$ otherwise. Thus,
	\[\rho(\h)\geq \sum_{e \in E(\h)} [\overline{x}(e)]^2 = \sum_{e \in E(\h')} [x(e)]^2 = \rho(\h').\]
\end{proof}

\section{Structural and spectral properties}\label{sec:strutural-properties}

In this section, we will determine structural characteristics of a hypergraph
from its signless Laplacian spectrum. More precisely, we will study
regular and partially bipartite uniform hypergraphs through their signless
Laplacian eigenvalues.

\begin{Teo}\label{teo:qregular}
Let $\h$ be a connected $k$-graph. The following statements are equivalent
\begin{itemize}
	\item[(a)] $\h$ is regular.
	\item[(b)] $\rho(\h) = kd(\h)$.
	\item[(c)] $\rho(\h) = k\Delta(\h)$.
	\item[(d)] The principal eigenvector of $\Q(\h)$ is $\x=\left(\frac{1}{\sqrt{n}},\ldots,\frac{1}{\sqrt{n}}\right)$, where $n = |V(\h)|$.
\end{itemize}
\end{Teo}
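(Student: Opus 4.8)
The plan is to prove the equivalences via a cycle $(a)\Rightarrow(d)\Rightarrow(b)\Rightarrow(c)\Rightarrow(a)$, using the tools already established, especially the Rayleigh principle (Theorem \ref{teoray}), Perron--Frobenius (Theorem \ref{perron}), and the row-sum identity $\sum_{j}q_{ij}=kd(i)$. First I would show $(a)\Rightarrow(d)$: if $\h$ is $r$-regular, then every row of $\Q$ sums to $kr$, so the all-ones vector $\mathbf{1}$ satisfies $\Q\mathbf{1}=kr\,\mathbf{1}$, exhibiting $kr$ as an eigenvalue with a strictly positive eigenvector. By Perron--Frobenius (connectedness is assumed), $\rho(\h)$ is the unique eigenvalue admitting a positive eigenvector, so $\mathbf{1}$ must be the principal eigenvector; normalizing under $\ell^2$ gives exactly $\x=(1/\sqrt{n},\ldots,1/\sqrt{n})$. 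This also hands us $\rho(\h)=kr=kd(\h)$ for free.

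Next I would handle $(d)\Rightarrow(b)$: if the principal eigenvector is the uniform vector $\x=(1/\sqrt n,\ldots,1/\sqrt n)$, then by the Rayleigh principle $\rho(\h)=\x^T\Q\x$. Using the row-sum lemma, $\x^T\Q\x=\frac{1}{n}\sum_{i}\sum_{j}q_{ij}=\frac{1}{n}\sum_{i}kd(i)=kd(\h)$, which is precisely $(b)$. The implication $(b)\Rightarrow(c)$ is a short comparison argument: since $d(\h)\le\Delta(\h)$ always, it suffices to bound $\rho(\h)\ge k\Delta(\h)$ and $\rho(\h)\le k\Delta(\h)$ to force $d(\h)=\Delta(\h)$. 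For the upper bound I would use the standard maximum-row-sum estimate for the spectral radius of a nonnegative matrix, $\rho(\Q)\le\max_i\sum_j q_{ij}=k\Delta(\h)$; combined with $(b)$ this yields $kd(\h)=\rho(\h)\le k\Delta(\h)$, and since $d(\h)\le\Delta(\h)$ the inequalities collapse to $\rho(\h)=k\Delta(\h)$.

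The most delicate step is $(c)\Rightarrow(a)$, since here I must upgrade a spectral equality to a structural conclusion, and this is where the connectedness hypothesis is essential. The idea is to test the Rayleigh quotient against the uniform vector: by Theorem \ref{teoray}, $\rho(\h)\ge\x^T\Q\x=kd(\h)$ for $\x=(1/\sqrt n,\ldots,1/\sqrt n)$. If $(c)$ holds then $k\Delta(\h)=\rho(\h)\ge kd(\h)$, forcing $d(\h)=\Delta(\h)$, which for the average to equal the maximum compels $d(v)=\Delta(\h)$ for every vertex, i.e.\ regularity. I would present this cleanly by noting that $\frac{1}{n}\sum_v d(v)=\Delta$ with each $d(v)\le\Delta$ admits equality only when all $d(v)$ coincide.

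The main obstacle I anticipate is ensuring each spectral-to-structural passage is genuinely forced rather than merely consistent, and tracking where connectedness is used: it enters through Perron--Frobenius to guarantee uniqueness of the positive eigenvector in $(a)\Rightarrow(d)$, and implicitly through the equality conditions in the Rayleigh principle. I would double-check that the uniform vector is admissible as a unit eigenvector and that the row-sum identity is applied with the correct factor $k$ rather than $k-1$; these are the places where an off-by-one in the uniformity constant $k$ could silently break the chain.
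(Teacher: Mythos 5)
Your first two links are sound: $(a)\Rightarrow(d)$ works because a positive eigenvector of the irreducible, symmetric, nonnegative matrix $\Q$ must correspond to the Perron root (eigenvectors of distinct eigenvalues are orthogonal, and two positive vectors cannot be orthogonal), and $(d)\Rightarrow(b)$ is a correct application of the row-sum identity together with the Rayleigh principle. But the cycle breaks at \emph{both} remaining links, and for the same reason: in each case you derive an inequality that is trivially true for every hypergraph and then claim it ``forces'' equality. In $(b)\Rightarrow(c)$, combining $\rho(\h)=kd(\h)$ with the max-row-sum bound $\rho(\Q)\le k\Delta(\h)$ gives only $kd(\h)\le k\Delta(\h)$, which holds always and forces nothing. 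You concede you would also need the lower bound $\rho(\h)\ge k\Delta(\h)$, but you never prove it, and it is in fact \emph{false} for non-regular hypergraphs: for the star $K_{1,n-1}$ (a connected $2$-graph) one has $\rho(\Q)=n$ while $k\Delta=2(n-1)>n$ for $n\ge 3$. That lower bound holds precisely when $\h$ is regular, so invoking it here would be circular. In $(c)\Rightarrow(a)$ the same error recurs: from $k\Delta(\h)=\rho(\h)\ge kd(\h)$ you get $\Delta(\h)\ge d(\h)$, which is true for every hypergraph whatsoever (the average never exceeds the maximum), so nothing collapses and regularity is not forced.

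What both broken steps are missing is an \emph{equality condition}, and that is exactly how the paper closes the cycle. For $(b)$, the paper uses the equality case of the Rayleigh principle (Theorem \ref{teoray}): since the uniform unit vector attains $\x^T\Q\x=kd(\h)=\rho(\h)$, it must be an eigenvector of $\rho$, which gives $(d)$, and then $kd(i)=\rho$ for every $i$, i.e.\ regularity. For $(c)\Rightarrow(a)$, the paper takes the principal eigenvector $\x$, picks a vertex $u$ with maximal entry, and from $k\Delta x_u=\sum_{e\in E_{[u]}}x(e)\le kd(u)x_u\le k\Delta x_u$ concludes $d(u)=\Delta$ and $x_v=x_u$ for every $v\in N(u)$, then propagates this along the connected hypergraph; connectedness enters here in an essential, combinatorial way, not merely through Perron--Frobenius. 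Alternatively, you could cite the standard fact that an irreducible nonnegative matrix has $\rho$ equal to its maximum (or minimum) row sum only if all row sums coincide; since the row sums of $\Q$ are $kd(i)$, either $(b)$ or $(c)$ then yields regularity directly. Without one of these equality arguments, your chain does not close.
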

\begin{proof}
We will prove the result through the following chain of implications,
	$$(a) \Rightarrow (b) \Rightarrow (d) \Rightarrow (c) \Rightarrow (a).$$
	
Suppose $\h$ is $r$-regular, then for each vertex $u$ we have $|E_{[u]}|=r$. Thus,
	\[(\Q\mathbf{1})_u = \sum_{e \in E_{[u]}}x(e) = kr(1).\]

\noindent That is, $\mathbf{1}$ is an eigenvector associated with the
eigenvalue $kr$, and since $\h$ is regular, then $r = d(\h)$. By
Perron-Frobenius Theorem \ref{perron}, we conclude that $\rho(\h) = kd(\h)$. 	

Now, suppose $\rho(\h) = kd(\h)$. We notice that the vector
$\x=\left(\frac{1}{\sqrt{n}},\ldots,\frac{1}{\sqrt{n}}\right)$ solves the
following optimization problem.
	$$\rho(\h) = \max_{||\y||=1}\{\y^T\Q\y\}\geq \x^T\Q\x = \sum_{i \in V}
\frac{kd(i)}{n} = kd(\h).$$

\noindent Thus, by Rayleigh principle \ref{teoray}, we conclude that $\x$ is
the principal eigenvector of $\Q$. 	

Let $\x=\left(\frac{1}{\sqrt{n}},\ldots,\frac{1}{\sqrt{n}}\right)$ be the
principal eigenvector of $\Q$. If $u \in V$ is a vertex of maximum degree,
then
	\[\rho(\h)\left(\frac{1}{\sqrt{n}}\right) = (\Q\mathbf{x})_u = \sum_{e \in E_{[u]}}x(e) = \Delta k\left(\frac{1}{\sqrt{n}}\right), \quad \Rightarrow \quad  \rho(\h) = k\Delta(\h).\]

If $\rho(\h) = k\Delta(\h)$ is the spectral radius of $\Q$ and $\x$ its
principal eigenvector. Let $u \in V$ be a vertex, such that $x_u \geq x_v$
for all $v \in V$. Thus,
$$k\Delta x_{u} = \sum_{e \in E{[u]}}x(e).$$

\noindent We observe that this equality is only possible if, $d(u) = \Delta$
and $x_v = x_{u}$ for all $v \in N(u)$. Hence, we conclude that every vertex
that has maximum value in the eigenvector $\x$, must have maximum degree.
Moreover, every vertex that is neighbor of another vertex that has maximum
value in the eigenvector, must also have maximum value. By the connectivity
of the hypergraph, we conclude that all the vertices have maximum value in
the principal eigenvector and therefore maximum degree, i.e. $\h$ is regular.
\end{proof}

\begin{Lem}\label{lem:caracZerox}
	Let $\h$ be a $k$-graph. Thus, $(0,\x)$ is an eigenpair of $\Q(\h)$ if, and only if,
	for each edge $e\in E$ we have $\;x(e)=0.$
\end{Lem}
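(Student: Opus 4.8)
The plan is to prove this directly from Proposition \ref{pro:xqx}, which states that $\x^T\Q\x = \sum_{e \in E}[x(e)]^2$. This is exactly the kind of quadratic-form identity that makes the eigenvalue zero transparent, because the right-hand side is a sum of squares and hence vanishes only when every individual term vanishes.

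First I would establish the forward direction. Suppose $(0,\x)$ is an eigenpair, so $\Q\x = \mathbf{0}$. Then $\x^T\Q\x = \x^T\mathbf{0} = 0$. Applying Proposition \ref{pro:xqx}, I get $\sum_{e \in E}[x(e)]^2 = 0$. Since this is a sum of real squares, each term must be zero, giving $x(e) = 0$ for every edge $e \in E$, as desired.

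For the converse, suppose $x(e) = 0$ for every edge $e \in E$. Here I would use the vertex-wise formula for $\Q\x$ recorded just before Proposition \ref{pro:xqx}, namely $(\Q\x)_u = \sum_{e \in E_{[u]}}x(e)$. Since each summand $x(e)$ is zero by hypothesis, we get $(\Q\x)_u = 0$ for every vertex $u$, so $\Q\x = \mathbf{0} = 0\cdot\x$, meaning $(0,\x)$ is an eigenpair (assuming $\x \neq \mathbf{0}$, which is implicit in calling it an eigenpair).

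I do not anticipate a genuine obstacle here; the lemma is essentially a restatement of the two formulas already proved. The only subtlety worth flagging is that the forward direction alone could equally be argued via the vertex-wise formula, but routing it through Proposition \ref{pro:xqx} is cleaner because it packages ``sum of squares equals zero'' into a single step rather than requiring a separate argument that $(\Q\x)_u = 0$ for all $u$ forces each $x(e)=0$. One should be mildly careful that the forward implication as stated really needs the positive-semidefinite structure (sum of squares), which is exactly what Proposition \ref{pro:xqx} supplies, so invoking it is the natural and economical choice.
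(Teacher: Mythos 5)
Your proof is correct and follows exactly the paper's own argument: the forward direction via the sum-of-squares identity $\x^T\Q\x=\sum_{e\in E}[x(e)]^2$ from Proposition \ref{pro:xqx}, and the converse via the vertex-wise formula $(\Q\x)_u=\sum_{e\in E_{[u]}}x(e)$. Nothing to add; your remark about $\x\neq\mathbf{0}$ being implicit in the word ``eigenpair'' is a fine point the paper leaves tacit as well.
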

\begin{proof}
If $(0,\x)$ is a signless Laplacian eigenpair of $\h$, then $\Q\x =
\mathbf{0}$. So,
	$$\x^T\Q\x = \x^T\mathbf{0} = 0 \quad\Rightarrow\quad \sum_{e \in E}[x(e)]^2 = 0\quad \Rightarrow\quad x(e) = 0,\;\; \forall e \in E.$$
	
Conversely, let $\x$ be a vector of dimension $n=|V(\h)|$, such that $x(e)
= 0$, for each edge $e \in E$. So,
	$$(\Q\x)_u = \sum_{e \in E_{[u]}}x(e) = 0,\;\; \forall u  \in V \quad\Rightarrow\quad (0,\x)\; \textrm{ is an eigenpair of } \Q(\h).$$	
\end{proof}

We notice that for a graph, the condition $\;x_{v_i}+x_{v_j}=0$ for all
$e=\{v_i,v_j\}\in E$, implies a bipartition of vertices. Unfortunately for $k
\geq 3$, we do not have such a trivial characterization.
\begin{Def}
A hypergraph $\h$ is \textit{partially bipartite}, if we can separate the set
of vertices into three disjoint subsets $V = V_0\cup V_1\cup V_2$, where
$V_1$ and $V_2$ are non empty and each edge is fully contained in $V_0$ or
has vertices in both $V_1$ and $V_2$.
\end{Def}

Now, we prove Theorem \ref{teo:caracZero}. We state here again for easy
reference.

\noindent\textbf{Theorem \ref{teo:caracZero}. }\textit{Let $\h = (V,E)$ be a
$k$-graph. If $\lambda=0$ is an eigenvalue of $\Q$, then $\h$ is partially
bipartite.}
\begin{proof}
Let $\x$ be a eigenvector of $\lambda=0$. Define $$V_1 = \{v \in V: x_v >
0\},\; V_2 = \{v \in V: x_v < 0\}, \; V_0 = \{v \in V: x_v = 0\}.$$ As $x(e)
= 0$ for each edge $e \in E$, then the edge is contained in $V_0$, or it must
to have some vertices in $V_1$ and others in $V_2$, i.e. $\h$ is partially
bipartite. 		
\end{proof}
\begin{Def}
A hypergraph $\h$ is \textit{balanced partially bipartite}, if it is
partially bipartite and there exists a constant $c>0$, such that for each
edge $e \nsubseteq V_0$, it happens $\frac{|e\cap V_1|}{|e\cap V_2|} = c$.
\end{Def}
\begin{Teo}\label{cor:caracZero2}
	Let $\h = (V,E)$ be a $k$-graph. If it is balanced partially bipartite, then $\lambda=0$ is an eigenvalue of $\Q(\h)$.
\end{Teo}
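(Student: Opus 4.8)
The plan is to invoke Lemma \ref{lem:caracZerox}, which reduces the problem to exhibiting a single nonzero vector $\x$ satisfying $x(e)=0$ for every edge $e \in E$. Given the partial bipartition $V = V_0 \cup V_1 \cup V_2$, the natural candidate is a vector that is constant on each part: I would set $x_v = a$ for $v \in V_1$, $x_v = b$ for $v \in V_2$, and $x_v = 0$ for $v \in V_0$, and then choose the constants $a, b$ so that every edge sum vanishes.

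To see how to choose them, I would split the edges according to the definition of partial bipartiteness. If $e \subseteq V_0$, then every vertex of $e$ receives value $0$, so $x(e) = 0$ automatically. If $e \nsubseteq V_0$, then $e$ meets both $V_1$ and $V_2$, and any vertices of $e$ lying in $V_0$ contribute nothing, so
\[
x(e) = a\,|e \cap V_1| + b\,|e \cap V_2|.
\]
Here is where the balanced hypothesis enters: since $|e \cap V_1| = c\,|e \cap V_2|$ for every such edge, we obtain $x(e) = (ac + b)\,|e \cap V_2|$. Choosing $a = 1$ and $b = -c$ forces $ac + b = 0$, hence $x(e) = 0$ for every edge not contained in $V_0$ as well.

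It then remains only to check that $\x$ is nonzero, which is immediate because $V_1 \neq \emptyset$ guarantees at least one coordinate equal to $1$. With $x(e) = 0$ for all $e \in E$, Lemma \ref{lem:caracZerox} yields that $(0, \x)$ is an eigenpair of $\Q(\h)$, so $\lambda = 0$ is a signless Laplacian eigenvalue, completing the proof. I do not anticipate a genuine obstacle here; the only subtlety worth flagging is that the constancy of the ratio $|e \cap V_1|/|e \cap V_2|$ \emph{across all edges} is precisely what permits a single pair of constants $(a,b)$, rather than edge-dependent values, to annihilate every edge sum simultaneously. Without the balanced hypothesis the analogous two-valued vector would generally fail, which is consistent with the counterexample in the introduction showing that mere partial bipartiteness does not force $0$ into the spectrum.
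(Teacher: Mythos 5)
Your proposal is correct and follows essentially the same route as the paper: both define the vector taking value $1$ on $V_1$, $-c$ on $V_2$, and $0$ on $V_0$, verify $x(e)=0$ edge by edge using the balanced condition, and conclude via Lemma \ref{lem:caracZerox}. Your write-up is in fact slightly more careful than the paper's, since you explicitly check nonzeroness of $\x$ and spell out why a single pair of constants suffices.
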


\begin{proof}
	Since $\h$ is balanced partially bipartite, there is a constant $c = \frac{|e\cap V_1|}{|e\cap V_2|}$, where $e \in E(\h)$. So we define a vector $\x$ of dimension $n = |V|$, by
	$$x_v = \begin{cases} \quad 1 \quad \textrm{if}\;  v \in V_1 \\ \quad 0 \quad \textrm{if}\;  v \in V_0 \\ \; -c \quad \textrm{if}\;  v \in V_2 \\ \end{cases}$$
	Thus $x(e)=0$, for each edge $e \in E$. By Lemma \ref{lem:caracZerox}, we conclude the result.
\end{proof}

\begin{Exe} We illustrate here that balanced partially bipartite graphs
are abundant, by presenting two examples that are easy to find. Let $\h =
(V,E)$ be a $k$-graph in which 	
	\begin{enumerate}
\item  $\h$ has a vertex $v$ with the property that each edge containing it
    also contains another vertex of degree 1;
\item $\h$ has a couple of vertices which are contained in exactly the same
    edges;
	\end{enumerate} In both cases $\h$ is balanced partially bipartite.
\end{Exe}

\section{Relating classical and spectral parameters}
\label{sec:classical-parameters}

In this section, we will relate classic and spectral parameters of a
hypergraph. More precisely, we will relate the spectral radius to the degrees
and the chromatic number, the number of edges is related to the sum of the
eigenvalues, and the diameter is related to the number of distinct
eigenvalues of the signless Laplacian matrix.

\begin{Teo}\label{teo:qcotas}
If $\h$ is a connected $k$-graph and $\rho(\Q)$ is its spectral radius, then
	\[\min_{e\in E}\left\lbrace \sum_{v \in e}d(v)\right\rbrace  \leq \rho(\Q) \leq \max_{e\in E}\left\lbrace \sum_{v \in e}d(v)\right\rbrace.\]
\end{Teo}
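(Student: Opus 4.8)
The plan is to bound the spectral radius by the minimum and maximum of the quantity $\sum_{v\in e}d(v)$ over edges $e$, which by Lemma \ref{lem:graulinha} is exactly $d_\lin(u)+k$, where $u$ is the vertex of $\lin(\h)$ corresponding to $e$. So the strategy is to translate the problem into the line multigraph and exploit the relation $\rho(\Q)=\rho(\A_\lin)+k$ established in the Remark following Proposition \ref{pro:poliqpolilin}. The claim then becomes the statement that $\min_u d_\lin(u)\le\rho(\A_\lin)\le\max_u d_\lin(u)$, i.e. the spectral radius of a (multi)graph adjacency matrix is sandwiched between the minimum and maximum degree. This is the standard bound for nonnegative irreducible matrices, and I would invoke it directly.

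For the upper bound I would argue via the principal eigenvector of $\Q$. Let $\x$ be the positive principal eigenvector guaranteed by Perron--Frobenius (Theorem \ref{perron}), and let $u$ be a vertex with $x_u\ge x_v$ for all $v$. Using the identity $(\Q\x)_u=\sum_{e\in E_{[u]}}x(e)$ from the displayed formula preceding Proposition 4.8, I would write
\[
\rho(\Q)\,x_u=\sum_{e\in E_{[u]}}x(e)=\sum_{e\in E_{[u]}}\sum_{v\in e}x_v\le\sum_{e\in E_{[u]}}\sum_{v\in e}x_u=x_u\sum_{e\in E_{[u]}}|e|=x_u\,k\,d(u),
\]
so that $\rho(\Q)\le kd(u)$. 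However this gives a bound in terms of $kd(u)$, not $\sum_{v\in e}d(v)$, so this crude estimate is not quite the stated bound. To get the edge-degree-sum form I would instead work through the line multigraph: apply the classical degree bound $\rho(\A_\lin)\le\max_u d_\lin(u)$ to $\A_\lin$, then use Lemma \ref{lem:graulinha} to rewrite $d_\lin(u)=\bigl(\sum_{v\in e}d(v)\bigr)-k$ and add $k$ via $\rho(\Q)=\rho(\A_\lin)+k$. The lower bound follows symmetrically from $\rho(\A_\lin)\ge\min_u d_\lin(u)$, which in turn comes from testing the Rayleigh quotient of $\A_\lin$ on the all-ones vector, or equivalently from testing $\x^T\Q\x/\x^T\x$ with an appropriate indicator on $\Q$ directly.

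Alternatively, the lower bound can be obtained entirely within $\Q$ by a Rayleigh-quotient argument: for any edge $e_0$ and its associated edge-neighborhood structure, choosing a suitable test vector supported near $e_0$ should force $\rho(\Q)$ above $\min_e\sum_{v\in e}d(v)$. The main obstacle I anticipate is making the lower bound clean: the upper bound via the principal eigenvector is immediate, but the lower bound requires either a careful choice of test vector or a direct appeal to the min-degree bound for irreducible nonnegative matrices applied to $\A_\lin$. I expect the cleanest route is to state both inequalities as consequences of the standard fact that the spectral radius of a connected multigraph lies between its minimum and maximum degree, applied to $\A_\lin$, and then convert via Lemma \ref{lem:graulinha} and the identity $\rho(\Q)=\rho(\A_\lin)+k$; the connectivity of $\lin(\h)$ (inherited from that of $\h$) is what legitimizes the irreducibility needed for this classical bound.
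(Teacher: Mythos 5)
Your proposal is correct and follows essentially the same route as the paper's own proof: pass to the line multigraph, apply the classical bound that the adjacency spectral radius of a multigraph lies between its minimum and maximum degree, and translate back via Lemma \ref{lem:graulinha} together with the identity $\rho(\Q)=\rho(\A_\lin)+k$ from Proposition \ref{pro:poliqpolilin}. The exploratory detour through the principal eigenvector of $\Q$ (which only yields $\rho(\Q)\le k\Delta$) is correctly recognized as insufficient and discarded, so the final argument matches the paper's.
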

\begin{proof}
For each $u\in V(\lin(\h))$, suppose $e_u \in E(\h)$ is the edge that gives
vertex $u$. By Lemma \ref{lem:graulinha}, we have $\;d_{\lin}(u)\, =\, \left(
\sum_{v \in e_u}d_{\h}(v)\right) -k.$ Now, by Theorem \ref{pro:poliqpolilin},
we have $\rho(\A_\lin) = \rho(\Q) - k$. For graphs and multigraphs, we know
$$\min_{ u\in V(\lin(\h))}d_\lin(u) \leq \rho(\A_\lin) \leq \max_{ u\in
V(\lin(\h))}d_\lin(u).$$
	Therefore,
	\[\min_{\{v_1,\ldots,v_k\}\in E}\left\lbrace \left(
\sum_{i=1}^kd(v_i)\right) -k\right\rbrace  \leq \rho(\Q) -k\leq
\max_{\{v_1,\ldots,v_k\}\in E}\left\lbrace \left(\sum_{i=1}^k d(v_i)\right)
-k\right\rbrace.\]	
Adding $k$ in each of the three parts of the
inequalities, we obtain the desired result.
\end{proof}

The result below may be seen as a corollary of Propositions 4.7 and 4.12 in
\cite{Reff2014}.
\begin{Cor}
If $\h$ is a connected $k$-graph and $\rho(\h)$ is its spectral radius, then
	\[kd(\h) \leq \rho(\h) \leq k\Delta(\h).\]
\end{Cor}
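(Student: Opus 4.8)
The plan is to handle the two inequalities separately, obtaining the upper bound as an immediate consequence of Theorem~\ref{teo:qcotas} and the lower bound from the Rayleigh principle.

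For the upper bound, I would start from the right-hand estimate $\rho(\Q) \le \max_{e\in E}\{\sum_{v\in e}d(v)\}$ of Theorem~\ref{teo:qcotas}. Since $\h$ is a $k$-graph, every edge $e$ contains exactly $k$ vertices, and each of them has degree at most $\Delta(\h)$; hence $\sum_{v\in e}d(v) \le k\Delta(\h)$ for every $e\in E$. Taking the maximum over edges preserves this bound, so $\rho(\Q)\le k\Delta(\h)$.

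For the lower bound one might hope to use the left-hand estimate $\min_{e\in E}\{\sum_{v\in e}d(v)\} \le \rho(\Q)$, but this fails to deliver $kd(\h)$ directly: the minimum-degree-sum edge can fall below the average, so $\min_{e}\{\sum_{v\in e}d(v)\}$ need not exceed $kd(\h)$. Instead I would appeal to the variational characterization of $\rho(\Q)$ in Theorem~\ref{teoray}, testing it against the uniform unit vector $\x = (\tfrac{1}{\sqrt n},\ldots,\tfrac{1}{\sqrt n})$, exactly as in the proof of Theorem~\ref{teo:qregular}. By Proposition~\ref{pro:xqx}, $\x^T\Q\x = \sum_{e\in E}[x(e)]^2 = \sum_{e\in E}(k/\sqrt n)^2 = mk^2/n$, where $m=|E|$. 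The degree-sum identity $\sum_{v\in V}d(v)=km=nd(\h)$ then turns this into $\x^T\Q\x = kd(\h)$, and the Rayleigh principle gives $\rho(\Q)\ge \x^T\Q\x = kd(\h)$.

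Combining the two estimates yields $kd(\h)\le \rho(\h)\le k\Delta(\h)$. The only genuinely non-routine point is recognizing that Theorem~\ref{teo:qcotas} alone is insufficient for the lower bound, and that the averaged quantity $kd(\h)$ must instead be extracted from the Rayleigh quotient evaluated at the uniform vector; everything else is a direct substitution.
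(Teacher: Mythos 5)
Your proof is correct and follows essentially the same route as the paper: the lower bound $kd(\h) = \x^T\Q\x \le \rho(\h)$ via the Rayleigh principle applied to the uniform unit vector, and the upper bound from the right-hand estimate of Theorem~\ref{teo:qcotas} together with $\sum_{v\in e}d(v)\le k\Delta(\h)$. Your added remark that the left-hand estimate of Theorem~\ref{teo:qcotas} cannot yield the lower bound is a correct and worthwhile observation, but the argument itself matches the paper's.
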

\begin{proof}
If $n=|V|$, define
$\x=\left(\frac{1}{\sqrt{n}},\ldots,\frac{1}{\sqrt{n}}\right)$. By Theorems
\ref{teoray} and \ref{teo:qcotas}, we have
	$$kd(\h) = \x^T\Q\x \leq \rho(\h) \leq \max_{e\in E}\left\lbrace \sum_{v \in e}d(v)\right\rbrace \leq k\Delta(\h).$$	
\end{proof}

\begin{Def}
For a $k$-graph $\h$, a function $f:V \rightarrow \{1,\ldots,r\}$ is a
(vertex) $r$-\textit{coloring} of $\h$, if for every edge $e =
\{v_1,\ldots,v_k\}$ there exists $i \neq j$ such that $f(v_i) \neq f(v_j)$.
The \textit{chromatic number} $\chi(\h)$, is the minimum integer $r$ such
that $\h$ has an $r$-coloring.
\end{Def}

\begin{Teo}
Let $\h$ be a connected $k$-graph. If $\chi(\h)$ is its chromatic number,
then $$\chi(\h)\leq \frac{1}{k}\rho(\h)+1.$$
\end{Teo}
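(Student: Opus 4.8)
The plan is to bound the chromatic number via an averaging/counting argument connected to the spectral radius upper bound from Theorem~\ref{teo:qcotas}. Let me think about what the chromatic number relation should follow from.

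The plan is to mimic Wilf's spectral bound for graph colorings, replacing the adjacency spectral radius by the signless Laplacian spectral radius and the minimum degree by the quantity $\min_{e}\sum_{v\in e}d(v)$ appearing in Theorem~\ref{teo:qcotas}. If $\chi(\h)=1$ there are no edges and the inequality reads $1\le\frac{1}{k}\rho(\h)+1$, which holds since $\rho(\h)\ge 0$; so I would assume $\chi(\h)\ge 2$ and write $\chi$ for $\chi(\h)$.

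First I would pass to a \emph{vertex-critical} subgraph. Starting from $\h$, I repeatedly delete a vertex (together with every edge containing it) whenever this deletion leaves the chromatic number unchanged. The process terminates in a subgraph $\h'$ with $\chi(\h')=\chi$ such that deleting any vertex strictly lowers the chromatic number. Two facts about $\h'$ are then needed. First, $\h'$ is connected: if it had two or more components, one component would already realise $\chi$, and deleting a vertex from another component could not decrease the chromatic number, contradicting criticality. Second, every vertex of $\h'$ has degree at least $\chi-1$.

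The hard part will be this degree bound, which is the only place where the hypergraph (rather than graph) nature of the problem is felt. Suppose some $v\in V(\h')$ had $d_{\h'}(v)\le \chi-2$. By criticality $\chi(\h'-v)=\chi-1$, so I would fix a proper $(\chi-1)$-coloring of $\h'-v$ and try to extend it by coloring $v$. An edge $e\ni v$ can become monochromatic only if all vertices of $e\setminus\{v\}$ already share a single color and $v$ is then assigned that same color; hence each such edge forbids at most one color for $v$. At most $d_{\h'}(v)\le\chi-2$ colors are therefore forbidden, leaving a free color among the $\chi-1$ available. Coloring $v$ with it produces a proper $(\chi-1)$-coloring of $\h'$, contradicting $\chi(\h')=\chi$. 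Thus $d_{\h'}(v)\ge\chi-1$ for every vertex.

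Finally I would assemble the bound. Since every edge of $\h'$ has $k$ vertices, each of degree at least $\chi-1$ in $\h'$, we get $\sum_{v\in e}d_{\h'}(v)\ge k(\chi-1)$ for each edge $e$, and so $\min_{e\in E(\h')}\sum_{v\in e}d_{\h'}(v)\ge k(\chi-1)$. As $\h'$ is connected, Theorem~\ref{teo:qcotas} gives $\rho(\h')\ge k(\chi-1)$, and the earlier proposition stating $\rho(\h')\le\rho(\h)$ for any subgraph of a connected $k$-graph yields $\rho(\h)\ge\rho(\h')\ge k(\chi-1)$. Dividing by $k$ and rearranging gives $\chi\le\frac{1}{k}\rho(\h)+1$, as desired.
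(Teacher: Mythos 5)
Your proof is correct, but it follows a genuinely different route from the paper's. The paper runs a Szekeres--Wilf-type greedy argument: it peels off a minimum-degree vertex at each step to build an ordering $v_1,\ldots,v_n$, then colors greedily and bounds the color of $v_t$ by $\delta(\h(t))+1 \leq \frac{1}{k}\rho(\h(t))+1 \leq \frac{1}{k}\rho(\h)+1$, using the corollary $kd(\h)\leq\rho(\h)$ together with subgraph monotonicity of $\rho$. You instead pass to a vertex-critical subgraph $\h'$, prove that every vertex of $\h'$ has degree at least $\chi-1$, and then invoke the lower bound $\rho(\h')\geq\min_{e}\sum_{v\in e}d_{\h'}(v)\geq k(\chi-1)$ from Theorem~\ref{teo:qcotas} plus monotonicity --- Wilf's original critical-subgraph strategy. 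Both proofs hinge on the same hypergraph-specific observation (an edge constrains the color of $v$ only if all its other vertices are monochromatic, so each edge forbids at most one color), but they package it differently. The paper's version is algorithmic and yields an explicit greedy coloring; yours isolates a structural lemma of independent interest (critical $k$-graphs have minimum degree at least $\chi-1$) and is more careful on one point: you verify that $\h'$ is connected before applying Theorem~\ref{teo:qcotas}, whereas the paper applies its degree--spectral-radius corollary to the intermediate hypergraphs $\h(t)$, which may be disconnected, without comment.
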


\begin{proof}
We will define an order for the vertices of $\h$ as follows. Let $ \h(n) =
\h $ and $ v_n $ be a vertex of minimum degree in $\h(n)$. For each $t=2,\ldots,n$, let $
\h(t-1) $ be the subgraph obtained after removing a vertex, $v_{t}$ with
minimum degree from $ \h(t)$.

Let us use the ordering $ v_1, v_2, \ldots, v_n $, as input of a greedy
coloring algorithm, which paints $ v_t $ with the smallest color that makes $
\h(t) $ properly colored. 	

Notice that $\chi(\h(1))=1\leq \frac{1}{k}\rho(\h)+1$. Inductively, suppose $
\h(t-1) $ is properly colored with up to $ \frac{1}{k} \rho(\h) + 1 $
distinct colors. We see that $ v_t $ has a minimum degree in $ \h(t) $. Thus,
in the worst case, each edge containing $ v_t $ has all the other vertices
painted with the same color, and each of these edges uses one of the colors $
1,2, \ldots, d_{\h(t)}(v_t)$. So we should paint $ v_t $ with the color
$d_{\h(t)}(v_t)+1$. Thus,
	$$d_{\h(t)}(v_t)+1 = \delta(\h(t))+1\leq \frac{1}{k}\rho(\h(t))+1\leq
\frac{1}{k}\rho(\h)+1.$$
By induction hypothesis, we have $\chi(\h(t))\leq \frac{1}{k}\rho(\h)+1$. So, $\chi(\h)\leq \frac{1}{k}\rho(\h)+1$.
\end{proof}

\begin{Pro}
Let $\h$ be a $k$-graph with characteristic polynomial $P_\Q(\lambda) =
\lambda^n+q_1\lambda^{n-1}+\cdots+q_{n-1}\lambda+q_n$.  The number of edges of
of $\h$ is given by $m = -\frac{q_1}{k}$.
\end{Pro}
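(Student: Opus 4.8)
The plan is to identify $q_1$ with (minus) the trace of $\Q$ and then compute that trace combinatorially. Recall that for any matrix $\M$ of order $n$ whose characteristic polynomial is written $P_\M(\lambda) = \lambda^n + q_1\lambda^{n-1} + \cdots + q_n$, one has $q_1 = -\mathrm{tr}(\M)$: the trace equals the sum of the eigenvalues, while $q_1$ is precisely minus the first elementary symmetric function of those eigenvalues. Applying this to $\M = \Q$, it will suffice to show that $\mathrm{tr}(\Q) = km$.

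To compute the trace, I would simply read off the diagonal of $\Q = \B\B^T$. By Theorem \ref{teo:multigrafo} we have $\Q = \D + \A_\cli$, and since the clique multigraph carries no loops the diagonal of $\A_\cli$ vanishes; hence $q_{ii} = d(i)$ for every $i \in V$. Equivalently, one can argue directly from $\Q = \B\B^T$, obtaining $q_{ii} = \sum_{e \in E} b(i,e)^2 = \sum_{e \in E} b(i,e) = d(i)$, where the middle equality uses $b(i,e) \in \{0,1\}$. Either way, $\mathrm{tr}(\Q) = \sum_{i \in V} d(i)$.

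It then remains to invoke the handshake identity for uniform hypergraphs. Since $\h$ is a $k$-graph, each of the $m$ edges contains exactly $k$ vertices and contributes $1$ to the degree of each of them, so counting vertex–edge incidences in two ways yields $\sum_{i \in V} d(i) = km$. Combining the three steps gives $q_1 = -\mathrm{tr}(\Q) = -km$, that is, $m = -\frac{q_1}{k}$, as claimed.

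I expect no genuine obstacle here; the statement is essentially a trace computation. The only points that require care are the sign convention $q_1 = -\mathrm{tr}(\Q)$, which depends on the normalization of $P_\Q$ chosen in the statement and must be matched exactly, and the explicit use of $k$-uniformity in the incidence count—without it the sum of degrees would equal $\sum_{e \in E} |e|$ rather than $km$, and the clean formula would fail.
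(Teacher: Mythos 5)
Your proof is correct and follows essentially the same route as the paper: both identify $q_1$ with $-\mathrm{tr}(\Q)$ via the sum of eigenvalues and then evaluate the trace as $km$. The only difference is that you spell out the intermediate steps (diagonal entries of $\Q$ equal degrees, then the handshake count $\sum_{i\in V} d(i)=km$) that the paper leaves implicit.
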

\begin{proof}
If $\lambda_1\geq\lambda_2\geq\cdots\geq\lambda_n$ are all eigenvalues of the
matrix $\Q$, then $$q_1 = -(\lambda_1+\cdots+\lambda_n)= -Tr(\Q) = -km.$$
\end{proof}

\begin{Teo}
Let $\h$ be a $k$-graph with diameter $D$. The number of distinct eigenvalues
of the matrix $\Q$ is at least $D+1$.
\end{Teo}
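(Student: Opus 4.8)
The plan is to reduce the statement to the standard linear-algebra fact that the number of distinct eigenvalues of a symmetric matrix equals the degree of its minimal polynomial, and to connect this to the geometry of $\h$ through a combinatorial analysis of the powers of $\Q$. The bridge between the two is that $\Q$ is a nonnegative matrix whose off-diagonal entries record adjacency in the clique multigraph: by Theorem \ref{teo:multigrafo} we have $\Q=\D+\A_\cli$, so for $u\ne v$ the entry $q_{uv}$ is positive exactly when $u$ and $v$ share a hyperedge, i.e. exactly when they are at distance $1$. This lets the entries of $\Q^j$ encode walks, and hence distances, in $\h$.

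First I would establish the key claim: for distinct vertices $u,w$ at distance $d$, one has $(\Q^j)_{uw}=0$ for every $j<d$, while $(\Q^d)_{uw}>0$. Expanding $(\Q^j)_{uw}=\sum q_{ua_1}q_{a_1a_2}\cdots q_{a_{j-1}w}$ and using that all entries are nonnegative, a nonzero summand forces a sequence $u=a_0,a_1,\dots,a_j=w$ with $q_{a_{i-1}a_i}>0$ for every $i$; each such step is either a diagonal (stay-in-place) step with $a_{i-1}=a_i$ or an off-diagonal step in which $a_{i-1}$ and $a_i$ lie in a common hyperedge. Deleting the repetitions produced by the diagonal steps leaves a genuine walk in $\h$ from $u$ to $w$ of length at most $j<d$, contradicting that the distance equals $d$; hence $(\Q^j)_{uw}=0$. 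Conversely, a shortest walk $u=v_0e_1v_1\cdots e_dv_d=w$ yields $q_{v_{i-1}v_i}\ge 1$ for all $i$, so this single summand already forces $(\Q^d)_{uw}>0$.

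With the claim in hand the argument closes quickly. Suppose $\Q$ has exactly $r$ distinct eigenvalues. Since $\Q$ is symmetric it is diagonalizable, so its minimal polynomial has degree $r$, and consequently $\Q^r$, together with every higher power, lies in the span of $\mathbf{I},\Q,\dots,\Q^{r-1}$. Choose $u,w$ realizing the diameter, so that they are at distance $D$. If we had $r\le D$, then $\Q^D$ would be a linear combination of $\mathbf{I},\Q,\dots,\Q^{r-1}$, each of which has vanishing $(u,w)$-entry: indeed $(\mathbf{I})_{uw}=0$ because $u\ne w$, and $(\Q^j)_{uw}=0$ for $0\le j\le r-1\le D-1<D$ by the claim. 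This forces $(\Q^D)_{uw}=0$, contradicting $(\Q^D)_{uw}>0$. Therefore $r\ge D+1$, as desired.

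The only delicate point I anticipate is the bookkeeping inside the claim, namely arguing cleanly that a nonzero term in the expansion of $(\Q^j)_{uw}$ collapses to a valid hypergraph walk of length at most $j$ once the diagonal steps are discarded, and that this collapsed walk still joins $u$ to $w$ (so it has positive length because $u\ne w$). Once that combinatorial identity is secured, the passage through the minimal polynomial is entirely routine.
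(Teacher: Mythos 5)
Your proof is correct, and its skeleton matches the paper's: entries of powers of $\Q$ detect walks in $\h$, and an annihilating polynomial of degree equal to the number of distinct eigenvalues then forces a zero entry exactly where positivity is guaranteed, giving the contradiction. The genuine difference is in the key lemma. The paper proves, by induction on $l$, the two-sided claim that for distinct $i,j$ one has $(\Q^l)_{ij}>0$ if a walk of length $l$ joins them and $(\Q^l)_{ij}=0$ otherwise; you prove only the distance-based statement that $(\Q^j)_{uw}$ vanishes for $j$ strictly below the distance between $u$ and $w$ and is positive when $j$ equals that distance, by expanding the matrix product and collapsing the diagonal (stay-in-place) steps. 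Your weaker statement is exactly what the theorem needs, and it is the safer one: the paper's ``otherwise'' half is in fact false as stated, because the positive diagonal of $\Q$ creates positive entries with no corresponding walk. For instance, with $k=2$ and a single edge $\{1,2\}$, the paper's walk definition (consecutive vertices must be distinct) allows only odd-length walks from $1$ to $2$, yet $(\Q^2)_{12}=2>0$; for $k\geq3$ one can bounce inside an edge through a third vertex so the parity obstruction disappears, but the paper's induction never addresses this point. Since the theorem only ever invokes the claim for lengths at most the distance, the paper's argument still goes through, while your formulation simply avoids asserting anything beyond what is used. The closing steps are equivalent: your appeal to the minimal polynomial of a symmetric matrix and the paper's identity $(\Q-\lambda_1\mathbf{I})(\Q-\lambda_2\mathbf{I})\cdots(\Q-\lambda_t\mathbf{I})=0$ are the same fact about diagonalizable matrices.
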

\begin{proof}	
First we will show the following claim.
\begin{Afi}
If there is a walk with length $l$ connecting two distinct vertices $i$ and
$j$, then $\;(\Q^l)_{ij}>0,\;$  otherwise $\;(\Q^l)_{ij} = 0$.
\end{Afi}
The proof is by induction on $l$. We first notice that if $l = 1$ then the
signless Laplacian matrix has the desired properties. Now suppose the
statement is true for $l \geq 1$. Note that,
$$(\Q^{l+1})_{ij} = \sum_{t=1}^n(\Q^{l})_{it}(\Q)_{tj}.$$

\noindent Thus, if there is no walk with length $l + 1$, linking $ i $ and $
j $, then there can be no walk linking $ i $ to a neighbor of $j$. This
implies that, if $u$ is a neighbor of $j$ then $(\Q^{l})_{iu}=0$ and
otherwise $(\Q)_{uj}=0$. Therefore, $(\Q^{l+1})_{ij} = 0$. On the other hand,
assuming there is a walk with length $l + 1$, linking $ i $ and $ j $, then
there must be a walk  with length $l$, linking $ i $ to a neighbor $u$ of
$j$. So, $(\Q^{l})_{iu}>0$ and $(\Q)_{uj}>0$. Therefore, $(\Q^{l+1})_{ij}
> 0$. The claim is proven.

Returning to the proof of the theorem, we let $\lambda_1, \lambda_2 \ldots,
\lambda_t$ be all the distinct eigenvalues of $\Q$. So, $(\Q-\lambda_1
\mathbf{I})(\Q-\lambda_2 \mathbf{I})\cdots(\Q-\lambda_t \mathbf{I}) = 0.$
Thus, 	$ \Q^t+a_1\Q^{t-1}+\cdots+a_t\mathbf{I} = 0.$ Suppose, by way of
contradiction that $D \geq t$. Hence, there must exist $i$ and $j$ such that
its distance is $t$. Thus, $(\Q^t)_{ij} =
-a_1(\Q^{t-1})_{ij}-\cdots-a_t(\mathbf{I})_{ij} = 0$, because there should be
no walk shorter than $t$ linking the vertices $i$ and $j$. This contradicts
the claim. Therefore $t \geq D + 1$.
\end{proof}
\begin{Teo}
Let $\h$ be a $k$-graph with more than one edge. If the diameter of $\h$ is
$D$, then
		$$D \leq \left\lfloor 1+
\frac{\log((1-x_{min}^2)/x_{min}^2)}{\log(\lambda_1/\lambda_2)}
\right\rfloor,$$ where $\lambda_1>\lambda_2$ are the greatest eigenvalues of
$\Q$ and $x_{min}$ is the smallest entry of the principal eigenvector.
\end{Teo}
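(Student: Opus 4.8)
The plan is to combine the spectral decomposition of the powers of $\Q$ with the walk-counting claim established in the preceding theorem. Since $\Q$ is symmetric and positive semi-definite, I would fix an orthonormal basis of eigenvectors $\x^{(1)},\ldots,\x^{(n)}$ with $\Q\x^{(s)}=\lambda_s\x^{(s)}$ and $\lambda_1\geq\lambda_2\geq\cdots\geq\lambda_n\geq 0$, taking $\x^{(1)}$ to be the principal eigenvector; a finite diameter forces $\h$ to be connected, so by the Perron--Frobenius Theorem \ref{perron} all entries of $\x^{(1)}$ are strictly positive. Then $\Q^l=\sum_{s=1}^n\lambda_s^l\,\x^{(s)}(\x^{(s)})^T$, whence $(\Q^l)_{ij}=\sum_{s=1}^n\lambda_s^l x^{(s)}_i x^{(s)}_j$ for any two vertices $i,j$.

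Next I would pick vertices $i,j$ realizing the diameter, i.e.\ at distance $D$. Because the shortest walk between them has length $D$, there is no walk of length $D-1$, so the walk-counting claim from the previous theorem gives $(\Q^{D-1})_{ij}=0$. Isolating the principal term in the decomposition yields $\lambda_1^{D-1}x^{(1)}_i x^{(1)}_j=-\sum_{s=2}^n\lambda_s^{D-1}x^{(s)}_i x^{(s)}_j$. Taking absolute values, using $0\leq\lambda_s\leq\lambda_2$ for $s\geq2$ (so that $\lambda_s^{D-1}\leq\lambda_2^{D-1}$), and then Cauchy--Schwarz, I would bound the right-hand side by $\lambda_2^{D-1}\sqrt{\sum_{s\geq2}(x^{(s)}_i)^2}\sqrt{\sum_{s\geq2}(x^{(s)}_j)^2}$.

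The orthonormality of $\x^{(1)},\ldots,\x^{(n)}$ makes the matrix with these columns orthogonal, so each of its rows is a unit vector and $\sum_{s\geq2}(x^{(s)}_i)^2=1-(x^{(1)}_i)^2$, and likewise for $j$. This gives $(\lambda_1/\lambda_2)^{D-1}\leq \frac{\sqrt{1-(x^{(1)}_i)^2}}{x^{(1)}_i}\cdot\frac{\sqrt{1-(x^{(1)}_j)^2}}{x^{(1)}_j}$. Since $a\mapsto\sqrt{1-a^2}/a$ is decreasing on $(0,1)$ and every entry of $\x^{(1)}$ is at least $x_{min}$, each factor is at most $\sqrt{1-x_{min}^2}/x_{min}$, so $(\lambda_1/\lambda_2)^{D-1}\leq(1-x_{min}^2)/x_{min}^2$. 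Taking logarithms, which is legitimate because $\lambda_1>\lambda_2$ makes $\log(\lambda_1/\lambda_2)>0$, and using that $D$ is an integer, produces exactly the stated floor bound.

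The main obstacle I anticipate is the bookkeeping needed to keep every quantity well-defined and every inequality oriented correctly. In particular I must verify $\lambda_2>0$, which is where the hypothesis of more than one edge enters: two distinct edges give two linearly independent columns of $\B$, so $\mathrm{rank}(\B)\geq2$ and $\Q=\B\B^T$ has at least two positive eigenvalues. I also need $0<x_{min}<1$ so that the logarithm argument $(1-x_{min}^2)/x_{min}^2$ is positive, which follows from positivity of $\x^{(1)}$ and its $\ell^2$-normalization with $n\geq2$. The one delicate estimate is the monotonicity step that replaces $x^{(1)}_i,x^{(1)}_j$ by $x_{min}$; a careless sign or a reversed inequality there is the place where the whole chain could break.
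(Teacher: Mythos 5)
Your proof is correct and takes essentially the same route as the paper's: the spectral decomposition of $\Q^t$, the walk-counting claim, Cauchy--Schwarz, and the $x_{min}$ bound, only run in contrapositive form (you anchor at $t=D-1$, where $(\Q^{D-1})_{ij}=0$, while the paper shows that positivity of $\lambda_1^t x_{min}^2-\lambda_2^t(1-x_{min}^2)$ forces $(\Q^t)_{ij}>0$ and hence $t\geq D$). Your explicit verification that $\lambda_2>0$ via two linearly independent columns of $\B$ is actually more careful than the paper, which never indicates where the hypothesis of more than one edge is used.
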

\begin{proof}
As $\Q$ is real symmetric we may consider the orthonormal eigenvectors
$\x_1,\ldots,\x_n$ from the eigenvalues
$\lambda_1>\lambda_2\geq\cdots\geq\lambda_n$, respectively. In this case,
$\x_1$ is the principal eigenvector. Let $i$ and $j$ be vertices such that,
its distance is $D$. Using the spectral decomposition of $\Q$, for each
integer $t$, we have
	\begin{eqnarray}
\notag	(\Q^t)_{ij} &=& \sum_{l=1}^n \lambda_l^t(\x_l\x_l^T)_{ij} \geq \lambda_1^t(\x_1)_i(\x_1)_j - \left|\sum_{l=2}^n\lambda_l^t(\x_l\x_l^T)_{ij} \right|\\ \notag
	            &\geq& \lambda_1^tx_{min}^2 - \lambda_2^t\left(\sum_{l=2}^n(\x_l)_i^2 \right)^{\frac{1}{2}}  \left(\sum_{l=2}^n(\x_l)_j^2 \right)^{\frac{1}{2}} \\ \notag
	            &\geq& \lambda_1^tx_{min}^2 - \lambda_2^t\left(1-(\x_1)_i^2\right)^{\frac{1}{2}} \left(1-(\x_1)_j^2\right)^{\frac{1}{2}}\\
	            &\geq& \lambda_1^tx_{min}^2 - \lambda_2^t\left(1-x_{min}^2\right).  \label{exp}
	\end{eqnarray}
Notice that, if the expression (\ref{exp}) is positive, then  $(\Q^t)_{ij}$ is
positive and therefore $t \geq D$.
$$\lambda_1^tx_{min}^2 - \lambda_2^t\left(1-x_{min}^2\right) > 0\quad \Rightarrow \quad t > \frac{\log((1-x_{min}^2)/x_{min}^2)}{\log(\lambda_1/\lambda_2)}.$$
Therefore, the result follows.
\end{proof}

\section{Power hypergraph}\label{sec:power}
In this section, we will study the spectrum of the class of power
hypergraphs, relating its signless Laplacian eigenvalues to those of its base
hypergraph. The spectrum of this class has already been studied in the
context of tensors. See for example \cite{Kaue, Hu}.

\begin{Def}
Let $\h=(V,E)$ be a $k$-graph, let $s \geq 1$ and $r \geq ks$ be
integers. We define the (generalized) \textit{power hypergraph} $\h^r_s$ as
the $r$-graph with the following sets of vertices and edges
$$V(\h^r_s)=\left( \bigcup_{v\in V} \varsigma_v\right) \cup \left(
\bigcup_{e\in E} \varsigma_e\right)\;\; \textrm{and}\;\;
E(\h^r_s)=\{\varsigma_e\cup \varsigma_{v_1} \cup \cdots \cup \varsigma_{v_k}
\colon e=\{v_1,\ldots, v_k\} \in E\},$$	where $\varsigma_{v}=\{v_{1}, \ldots,
v_{s}\}$ for each vertex $v \in V(\h)$ and $\varsigma_e=\{v^1_e,
\ldots,v^{r-ks}_e\}$ for each edge $e \in E(\h)$.
\end{Def}

Informally, we say that $\h^r_s$ is obtained from a \textit{base hypergraph}
$\h=(V,E)$, by replacing each vertex $v\in V$ by a set $\varsigma_v$ of
cardinality $s$, and by adding a set $\varsigma_e$ with $r-ks$ new vertices,
to each edge $e \in\h$.

\begin{Exe}
The power hypergraph $(P_4)^5_2$ of the path $P_4$ is illustrated in
Figure \ref{fig:ex1}.
	\begin{figure}[h]
		\centering
		\begin{tikzpicture}
		[scale=1,auto=left,every node/.style={circle,scale=0.9}]
		\node[draw,circle,fill=black,label=below:,label=above:\(v_1\)] (v1) at (0,0) {};
		\node[draw,circle,fill=black,label=below:,label=above:\(v_2\)] (v2) at (4,0) {};
		\node[draw,circle,fill=black,label=below:,label=above:\(v_3\)] (v3) at (8,0) {};
		\node[draw,circle,fill=black,label=below:,label=above:\(v_4\)] (v4) at (12,0) {};
		\path
		(v1) edge node[left]{} (v2)
		(v2) edge node[below]{} (v3)
		(v3) edge node[left]{} (v4);
		\end{tikzpicture}
		
			\begin{tikzpicture}
		\node[draw,circle,fill=black,label=below:,label=above:\(v_{11}\)] (v1) at (0,0) {};
		\node[draw,circle,fill=black,label=below:,label=above:\(v_{12}\)] (v11) at (1,0) {};
		\node[draw,circle,fill=black,label=below:,label=above:] (v22) at (2.25,0) {};

		\node[draw,circle,fill=black,label=below:,label=above:\(v_{21}\)] (v3) at (3.5,0) {};
		\node[draw,circle,fill=black,label=below:,label=above:\(v_{22}\)] (v31) at (4.5,0) {};
		\node[draw,circle,fill=black,label=below:,label=above:] (v42) at (5.75,0) {};
	
		\node[draw,circle,fill=black,label=below:,label=above:\(v_{31}\)] (v5) at (7.5,0) {};
		\node[draw,circle,fill=black,label=below:,label=above:\(v_{32}\)] (v51) at (8.5,0) {};
		\node[draw,circle,fill=black,label=below:,label=above:] (v62) at (9.75,0) {};
		
		\node[draw,circle,fill=black,label=below:,label=above:\(v_{41}\)] (v64) at (11,0) {};
		\node[draw,circle,fill=black,label=below:,label=above:\(v_{42}\)] (v7) at (12,0) {};

		\begin{pgfonlayer}{background}
		\draw[edge,color=gray] (v1) -- (v31);
		\draw[edge,color=gray,line width=25pt] (v3) -- (v51);
		\draw[edge,color=gray] (v5) -- (v7);
		\end{pgfonlayer}
		\end{tikzpicture}
		\caption{The power hypergraph $(P_4)^5_2$.}
		\label{fig:ex3}\label{fig:ex1}
	\end{figure}
\end{Exe}

Let $\h^r_s$ be a power hypergraph. For each edge $e=\{i_1,\ldots,i_k\}\in
E(\h)$, we denote by $e^r_s=\varsigma_{i_1}\cup\cdots\cup \varsigma_{i_k}\cup
\varsigma_e \in E(\h^r_s)$ the edge obtained from $e \in E(\h)$. For
simplicity, we will write $\h^r = \h^r_1$ and $\h_s = \h^{ks}_s$. We identify
a vertex in each of the sets $\varsigma_v$ with the vertex $v$, and say that
it is a \emph{main vertice} of $\h^k_s$, while the other vertices in
$\varsigma_v$ are called \emph{copies}. The vertices in some of the sets
$\varsigma_e$ will be called \emph{additional vertices}.

We start this section by proving some algebraic properties of this class.

\begin{Lem}\label{teoadc}
Let $\h$ be a $k$-graph having two vertices $u$ and $v$ which are contained
exactly in the same edges. If $(\lambda, \x)$ is an eigenpair of $\Q$ with
$\lambda >0$, then $x_u = x_v$.
\end{Lem}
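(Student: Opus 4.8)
The plan is to exploit the fact that the hypothesis forces the two coordinates of $\Q\x$ indexed by $u$ and $v$ to agree, after which positivity of $\lambda$ does the rest. The key observation is that ``contained in exactly the same edges'' means precisely $E_{[u]} = E_{[v]}$, so the two edge neighborhoods coincide as sets.

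First I would invoke the coordinate formula recorded just before Proposition \ref{pro:xqx}, namely that for any vector $\x$ and any vertex $w$ one has $(\Q\x)_w = \sum_{e \in E_{[w]}} x(e)$. Applying this to $w=u$ and to $w=v$ and using $E_{[u]} = E_{[v]}$, the two resulting sums run over the identical index set $E_{[u]}$, so their values are equal; that is, $(\Q\x)_u = (\Q\x)_v$. Note that it is irrelevant that a given edge may contain both $u$ and $v$ (so that $x(e)$ involves both $x_u$ and $x_v$): the argument only needs equality of the index sets, not any relation among the individual terms.

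Next, since $(\lambda,\x)$ is an eigenpair we have $\Q\x = \lambda\x$, whence $(\Q\x)_u = \lambda x_u$ and $(\Q\x)_v = \lambda x_v$. Combining this with the previous step yields $\lambda x_u = \lambda x_v$, i.e. $\lambda(x_u - x_v)=0$. Finally I would use the hypothesis $\lambda>0$ to cancel the factor $\lambda$ and conclude $x_u = x_v$.

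I do not expect a genuine computational obstacle here; the whole content is the set identity $E_{[u]}=E_{[v]}$ together with the eigenvalue relation. The one point that deserves emphasis is where $\lambda>0$ enters: it is used solely in the final cancellation, and it is essential, since for $\lambda=0$ the identity $\lambda x_u = \lambda x_v$ is vacuous and the conclusion $x_u=x_v$ can indeed fail (for instance, by Lemma \ref{lem:caracZerox} a $0$-eigenvector may take different values on $u$ and $v$). Thus the hypothesis $\lambda>0$ is not merely technical but necessary for the statement.
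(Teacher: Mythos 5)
Your proof is correct and is essentially identical to the paper's: both apply the coordinate formula $(\Q\x)_w = \sum_{e \in E_{[w]}} x(e)$ to $u$ and $v$, use $E_{[u]} = E_{[v]}$ to equate the sums, and cancel the nonzero $\lambda$. Your additional remark on the necessity of $\lambda > 0$ is a nice touch but does not change the argument.
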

\begin{proof}
We just notice that,
	\[\lambda x_u = \sum_{e \in E_{[u]}}x(e) = \sum_{e \in E_{[v]}}x(e) =
\lambda x_v.\]
	Since $ \lambda \neq 0 $, then the result is true.
\end{proof}

\begin{Lem}\label{lem:verticeAdc}
Let $\h$ be a $k$-graph and $r \geq k$ be an integer. Then, for each eigenpair
$(\lambda,\x)$ of $\Q(\h^r)$, with $\lambda > r-k$, we have,
	\[x_{u} = \frac{x(e)}{\lambda-r+k},\quad \textrm{if } u
\textrm{ is an additional vertex of edge }e^r\in E(\h^r).\]
\end{Lem}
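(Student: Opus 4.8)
The plan is to use the fact that an additional vertex has degree one, so that the eigenvalue equation read off at that vertex only involves the single edge containing it, and then to exploit Lemma \ref{teoadc} to collapse the contribution of the remaining additional vertices of that edge.

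First I would observe that, by the construction of $\h^r=\h^r_1$, an additional vertex $u$ of $e^r$ lies in exactly one edge of $\h^r$, namely $e^r=\{i_1,\ldots,i_k\}\cup\varsigma_e$; hence $d_{\h^r}(u)=1$ and $E_{[u]}=\{e^r\}$. Combining the identity $(\Q\x)_u=\sum_{a\in E_{[u]}}x(a)$ with $\Q\x=\lambda\x$ then gives the scalar equation
$$\lambda x_u = x(e^r) = x(e)+x(\varsigma_e),$$
where $x(e)=x_{i_1}+\cdots+x_{i_k}$ collects the main vertices of the edge and $x(\varsigma_e)$ collects its $r-k$ additional vertices.

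Next I would note that every additional vertex of $e^r$ is contained in exactly the same edges of $\h^r$ (only $e^r$), so any two of them satisfy the hypothesis of Lemma \ref{teoadc}. Since $\lambda>r-k\geq 0$ we have in particular $\lambda>0$, so that lemma forces all additional vertices of $e^r$ to share the common value $x_u$; as there are exactly $r-k$ of them, $x(\varsigma_e)=(r-k)x_u$. Substituting into the displayed equation and rearranging yields $(\lambda-r+k)x_u=x(e)$, and because the hypothesis $\lambda>r-k$ makes $\lambda-r+k>0$, I may divide to obtain the claimed formula.

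I expect no genuine obstacle here: the two points that merely require care are the bookkeeping that an additional vertex has degree one (so the eigenvalue equation localizes to the single edge $e^r$) and the correct count of $r-k$ additional vertices per edge when applying Lemma \ref{teoadc}. The case $r=k$ is vacuous, since then $\varsigma_e=\emptyset$ and there are no additional vertices to constrain.
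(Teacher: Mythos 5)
Your proof is correct and follows essentially the same route as the paper's: apply Lemma \ref{teoadc} to equate the entries of all additional vertices of $e^r$ (valid since $\lambda > r-k \geq 0$), write the eigenvalue equation at one such vertex as $\lambda x_u = x(e^r) = (r-k)x_u + x(e)$, and solve. Your added remarks on the degree-one localization and the vacuous case $r=k$ are sound but not needed beyond what the paper does.
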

\begin{proof}
Suppose $\varsigma_e=\{i_e^1,\ldots,i_e^{r-k}\}$ and denote $u = i_e^1$. By Lemma
 \ref{teoadc} we know that $x_{u}=x_{i_e^{2}}\cdots=x_{i_e^{r-k}}$. So,
 $\;\lambda x_{u} = x(e^r)=(r-k)x_{u}+x(e)$. Thus,
	\[(\lambda-r+k)x_{u}=x(e) \quad \Rightarrow \quad x_{u} =
\frac{x(e)}{\lambda-r+k}.\]
\end{proof}

\begin{Pro}
Let $\h$ be a $k$-graph. If $\mu\neq0$ is a signless Laplacian eigenvalue of
$\h$, then $\lambda= \mu+r-k$ is an  eigenvalue of $\Q(\h^r)$.
\end{Pro}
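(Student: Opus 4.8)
The plan is to build an explicit eigenvector of $\Q(\h^r)$ for the value $\lambda = \mu + r - k$ by lifting a given eigenvector $\x$ of $\Q(\h)$. First I would observe that, since $\Q$ is positive semidefinite (as recorded in Section \ref{sec:Laplacian}), the hypothesis $\mu \neq 0$ actually forces $\mu > 0$; hence $\lambda = \mu + r - k > r - k$, which places us in the regime covered by Lemma \ref{lem:verticeAdc} and, crucially, makes division by $\mu$ legitimate.

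Next I would define a vector $\y$ on $V(\h^r)$ from an eigenpair $(\mu,\x)$ of $\Q(\h)$ as follows: on each main vertex $v$ set $y_v = x_v$, and for each additional vertex $u \in \varsigma_e$ set $y_u = x(e)/\mu$. The value assigned to the additional vertices is exactly the one dictated by Lemma \ref{lem:verticeAdc} with $\lambda - r + k = \mu$; this is the one non-obvious ingredient, and having it already established is what keeps the argument short.

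The verification then splits into two routine computations. Using that each edge of $\h^r$ has the form $e^r = \{v_1,\ldots,v_k\}\cup\varsigma_e$ with $|\varsigma_e| = r-k$, I would first record
\[ y(e^r) = x(e) + (r-k)\frac{x(e)}{\mu} = \frac{\mu + r - k}{\mu}\,x(e) = \frac{\lambda}{\mu}\,x(e). \]
For a main vertex $v$, the edges of $\h^r$ containing it are precisely the lifts $e^r$ of the edges $e \in E_{[v]}$, so $(\Q\y)_v = \sum_{e \in E_{[v]}} y(e^r) = \tfrac{\lambda}{\mu}\sum_{e \in E_{[v]}} x(e) = \tfrac{\lambda}{\mu}(\Q\x)_v = \tfrac{\lambda}{\mu}\mu x_v = \lambda y_v$, where the eigenvalue equation for $\x$ is used in the penultimate step. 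For an additional vertex $u \in \varsigma_e$ the only edge of $\h^r$ containing it is $e^r$, so $(\Q\y)_u = y(e^r) = \tfrac{\lambda}{\mu}x(e) = \lambda y_u$. Thus $\Q\y = \lambda\y$.

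Finally I would note that $\y \neq \mathbf{0}$: since $\x$ is an eigenvector it has a nonzero entry, and the main vertices carry $y_v = x_v$, so $\y$ inherits one. Hence $\lambda = \mu + r - k$ is an eigenvalue of $\Q(\h^r)$. I do not anticipate any real obstacle; the only point demanding a moment's care is the reduction $\mu \neq 0 \Rightarrow \mu > 0$ via semidefiniteness, since it is precisely what guarantees that the formula defining $\y$ on the additional vertices is well defined.
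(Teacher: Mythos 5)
Your proposal is correct and follows essentially the same route as the paper: the paper's proof lifts an eigenvector of $\Q(\h)$ by keeping its values on main vertices, assigning $y(e)/\mu$ to the additional vertices of each edge $e^r$, and verifying the eigenvalue equation separately at main and additional vertices, exactly as you do. Your added remarks (that $\mu\neq 0$ suffices for the division, and that the lifted vector is nonzero because it restricts to $\x$ on main vertices) are fine but do not change the argument.
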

\begin{proof}
Suppose $\y$ is an eigenvector of $\Q(\h)$, associated with $ \mu $. Define a
vector $ \x $ of dimension $|V(\h^r)|$, by
	\[x_i= \begin{cases}\;y_i \quad\, \textrm{ if }  i
\textrm{ is a main vertex, } \\\, \frac{y(e)}{\mu} \quad \textrm{if } i
\textrm{ is an additional vertex of the edge } e^r. \end{cases}\]
If $ u $ is a main vertex, we have
	\begin{eqnarray*}
		(\Q(\h^r)\x)_u &=& \sum_{e^r \in E(\h^r)_{[u]}}x(e^r)\\
		&=& \sum_{e \in E(\h)_{[u]}}\left( y(e)+(r-k)\frac{y(e)}{\mu}\right)\\
		&=& \sum_{e \in E(\h)_{[u]}}y(e)+\left( \frac{r-k}{\mu}\right)\!\!\! \sum_{e \in E(\h)_{[u]}}y(e)\\
		&=& (\mu+r-k)y_u = (\mu+r-k)x_u.
	\end{eqnarray*}
	
	Now, if $ u $ is an additional vertex, we have
	$$(\Q(\h^r)\x)_u = x(e^r) =  y(e)+(r-k)\frac{y(e)}{\mu} =
(\mu+r-k)\frac{y(e)}{\mu} = (\mu+r-k)x_u.$$
	Therefore, the result follows.
\end{proof}

\begin{Lem}
Let $\h$ be a $k$-graph and $s\geq 1$ an integer. If $(\lambda,\x)$ is a
signless Laplacian eigenpair of $\h_s$, with $\lambda > 0$, then for each
edge $e_s \in E(\h_s)$, we have $x(e_s) = sx(e)$.
\end{Lem}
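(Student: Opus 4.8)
The plan is to exploit the fact that $\h_s = \h^{ks}_s$ is the power hypergraph with parameter $r = ks$, so that $r-ks = 0$ and every set $\varsigma_e$ is empty. In other words, $\h_s$ has \emph{no} additional vertices: each vertex of $\h_s$ is a copy belonging to some block $\varsigma_v$ with $v \in V(\h)$. First I would fix, for each base vertex $v \in V(\h)$, the block of copies $\varsigma_v = \{v_1,\ldots,v_s\}$ and observe that by the construction of $\h_s$ all of these $s$ copies are contained in exactly the same edges, namely those $e_s$ with $v \in e$.

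The key step is to invoke Lemma \ref{teoadc}. Since $\lambda > 0$, and since any two copies in a block $\varsigma_v$ lie in exactly the same edges of $\h_s$, the lemma gives that they carry the same eigenvector entry. Hence all copies of a given base vertex share one common value, which I would denote $x_v$ (identifying it with the entry at the main vertex of $\varsigma_v$); this is precisely what lets us speak of $x(e)$ for an edge $e$ of the base hypergraph $\h$.

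It then remains to evaluate $x(e_s)$ directly. For an edge $e = \{u_1,\ldots,u_k\} \in E(\h)$ the corresponding edge is $e_s = \varsigma_{u_1} \cup \cdots \cup \varsigma_{u_k}$, so summing the entries of $\x$ over $e_s$ splits into $k$ blocks of $s$ equal copies each:
\[
x(e_s) \;=\; \sum_{i=1}^{k}\;\sum_{w \in \varsigma_{u_i}} x_w \;=\; \sum_{i=1}^{k} s\,x_{u_i} \;=\; s\sum_{i=1}^{k} x_{u_i} \;=\; s\,x(e),
\]
which is the claim.

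The argument is essentially bookkeeping once the reduction is seen; the only genuine content is the appeal to Lemma \ref{teoadc}, and the hypothesis $\lambda > 0$ is exactly what makes that appeal available (for $\lambda = 0$ the copies need not agree, so the clean factor of $s$ can fail). The one point to get right, and the main thing I would emphasize, is the observation that at $r = ks$ there are no additional vertices, so that every vertex of $\h_s$ is a copy governed by the equal-copies lemma and the formula collapses without leftover terms.
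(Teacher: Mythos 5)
Your proof is correct and is essentially identical to the paper's: both invoke Lemma \ref{teoadc} (with the hypothesis $\lambda>0$) to conclude that all $s$ copies in each block $\varsigma_v$ share the entry $x_v$, and then sum block-by-block over $e_s = \varsigma_{u_1}\cup\cdots\cup\varsigma_{u_k}$ to get $x(e_s)=sx(e)$. Your preliminary remark that $r=ks$ leaves no additional vertices is a helpful clarification of why the bookkeeping closes up, but it is the same argument.
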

\begin{proof}
By Lemma \ref{teoadc}, we have $x(\varsigma_u) = x_u+x_{u^2}+\cdots+x_{u^s} =
sx_u$. Hence,
	$$x(e_s) = x(\varsigma_{u_1})+\cdots+x(\varsigma_{u_k}) =
s(x_{u_1}+\cdots+x_{u_k}) = sx(e).$$
\end{proof}

\begin{Pro}
	Let $\h$ be a $k$-graph and $s\geq 1$ an integer. If $\mu\neq0$ is a signless Laplacian eigenvalue of $\h$, then $\lambda= s\mu$ is an eigenvalue of $\Q(\h_s)$.
\end{Pro}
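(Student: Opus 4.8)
The plan is to construct an explicit eigenvector of $\Q(\h_s)$ for the value $s\mu$, following the template of the preceding proposition for $\h^r$. Recall that $\h_s = \h^{ks}_s$, so that $r-ks=0$ and the hypergraph $\h_s$ has \emph{no} additional vertices: its vertex set is simply $\bigcup_{v\in V}\varsigma_v$, with each $\varsigma_v$ a set of $s$ copies of $v$. Given an eigenvector $\y$ of $\Q(\h)$ associated with $\mu$, I would define a vector $\x$ on $V(\h_s)$ by assigning to every copy in $\varsigma_v$ the common value $y_v$. Since there are no additional vertices, this prescription determines $\x$ completely, and $\x\neq\mathbf{0}$ because $\y\neq\mathbf{0}$. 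Assigning all copies the same value is the natural choice: by Lemma \ref{teoadc} the copies in $\varsigma_v$ lie in exactly the same edges of $\h_s$, so any eigenvector for a nonzero eigenvalue is forced to be constant on each $\varsigma_v$.

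The heart of the verification is the edge identity. For each edge $e=\{w_1,\dots,w_k\}\in E(\h)$ with corresponding edge $e_s=\varsigma_{w_1}\cup\cdots\cup\varsigma_{w_k}\in E(\h_s)$, the construction gives
\[
x(e_s)=\sum_{i=1}^{k} x(\varsigma_{w_i})=\sum_{i=1}^{k} s\,y_{w_i}=s\,y(e),
\]
using that each of the $s$ copies in $\varsigma_{w_i}$ carries the value $y_{w_i}$. This is the same relation as in the lemma immediately preceding the statement, here holding by construction rather than as a consequence of being an eigenpair.

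Finally I would evaluate $\Q(\h_s)\x$ at an arbitrary vertex. If $u\in\varsigma_v$ is any copy of $v$, then the edges of $\h_s$ incident to $u$ are precisely the edges $e_s$ arising from the edges $e\in E(\h)$ incident to $v$; hence, using the expression $(\Q\x)_u=\sum_{e\in E_{[u]}}x(e)$ recorded before Proposition \ref{pro:xqx} together with the identity above,
\[
(\Q(\h_s)\x)_u=\sum_{e\in E(\h)_{[v]}} x(e_s)=s\sum_{e\in E(\h)_{[v]}} y(e)=s\,(\Q(\h)\y)_v=s\mu\,y_v=s\mu\,x_u.
\]
Since this holds for every vertex $u$, the pair $(s\mu,\x)$ is an eigenpair of $\Q(\h_s)$, which is the claim.

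I do not anticipate a genuine obstacle: the construction is essentially forced by the equal-copy principle, and the verification is a single summation reindexing the edges of $\h_s$ by those of $\h$. The only point deserving care is to confirm that $\x$ is nonzero, so that $s\mu$ is a bona fide eigenvalue; this is immediate from $\y\neq\mathbf{0}$, and the hypothesis $\mu\neq0$ then guarantees $s\mu\neq0$, matching the pattern of the surrounding results (where the zero eigenvalue is treated separately).
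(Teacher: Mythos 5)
Your proposal is correct and takes essentially the same route as the paper: define $\x$ by giving every copy in $\varsigma_v$ the value $y_v$, observe $x(e_s)=s\,y(e)$ by construction, and verify the eigenvalue equation vertex by vertex. The paper's proof is just a terser version of exactly this computation.
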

\begin{proof}
	Suppose $\y$ is an eigenvector of $\Q(\h)$ associated with $\mu$. Define a vector $\x$ of dimension $|V(\h_s)|$, by $x_u = y_v$, if $u \in \varsigma_v$.
	Thus,
	\[(\Q(\h_s)\x)_u = \sum_{e_s\in E(\h_s)_{[u]}} x(e_s) =
\sum_{e \in E(\h)_{[u]}}sx(e) = s\mu x_u.\]
\end{proof}

\begin{Teo}\label{teo:q-qr}
	Let $\h$ be $k$-graph, $s\geq 1$ and $r \geq ks$ be two integers.
	$\lambda>r-ks$ is an eigenvalue of $\Q(\h^r_s)$ if and only if there
	is a signless Laplacian eigenvalue $\mu>0$ of $\h$ such that
$\lambda= s(\mu-k)+r$.	
\end{Teo}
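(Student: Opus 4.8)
The plan is to prove both implications by passing between eigenvectors of $\Q(\h^r_s)$ and eigenvectors of $\Q(\h)$, exploiting that vertices lying in exactly the same edges must share a common value in any eigenpair with positive eigenvalue. First I would record that the hypotheses $\lambda>r-ks$ and $r\geq ks$ force $\lambda>0$, so Lemma \ref{teoadc} applies throughout. Since all $s$ vertices of a set $\varsigma_v$ lie in exactly the edges $e^r_s$ with $e\ni v$, they carry a common value, which I denote $y_v$; likewise the $r-ks$ additional vertices of a set $\varsigma_e$ lie only in the single edge $e^r_s$, so they carry a common value $w_e$. Summing over an edge then gives $x(e^r_s)=s\,y(e)+(r-ks)\,w_e$, where $y(e)=\sum_{v\in e}y_v$ is evaluated in the base hypergraph.

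For the forward direction, suppose $(\lambda,\x)$ is an eigenpair of $\Q(\h^r_s)$. Applying the identity $(\Q\x)_u=\sum_{e\in E_{[u]}}x(e)$ at an additional vertex $u$ of $e^r_s$ (which lies in that edge alone) yields $\lambda w_e=x(e^r_s)=s\,y(e)+(r-ks)\,w_e$, hence $w_e=\dfrac{s\,y(e)}{\lambda-r+ks}$, the denominator being positive by hypothesis. Substituting this into the eigen-equation at a vertex $u\in\varsigma_v$, where $(\Q\x)_u=\sum_{e\in E_{[v]}}x(e^r_s)$, the two contributions telescope: the bracket $s+(r-ks)\cdot\frac{s}{\lambda-r+ks}$ collapses to $\frac{s\lambda}{\lambda-r+ks}$. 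Dividing through by $\lambda>0$ produces exactly $(\Q(\h)\y)_v=\frac{\lambda-r+ks}{s}\,y_v$, so $\mu:=\frac{\lambda-r+ks}{s}$ is an eigenvalue of $\Q(\h)$ with eigenvector $\y$. One then checks that $\mu>0$ is equivalent to $\lambda>r-ks$, that $\lambda=s(\mu-k)+r$ rearranges from the definition of $\mu$, and that $\y\neq\mathbf{0}$ (otherwise every $w_e=0$ and $\x=\mathbf{0}$).

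For the converse I would run this construction backwards: given an eigenpair $(\mu,\y)$ of $\Q(\h)$ with $\mu>0$, set $\lambda=s(\mu-k)+r$ and define $\x$ by $x_u=y_v$ on each $\varsigma_v$ and $x_u=\frac{y(e)}{\mu}$ on each $\varsigma_e$; since $\lambda-r+ks=s\mu$, this matches the formula for $w_e$ found above. A direct verification with the same edge-sum identity confirms $\Q(\h^r_s)\x=\lambda\x$, and $\x\neq\mathbf{0}$ because $\y\neq\mathbf{0}$. The only genuine obstacle is bookkeeping: one must cleanly separate the copies from the additional vertices, keep the factors $s$ and $r-ks$ straight, and verify that eliminating $w_e$ yields the clean ratio $\lambda/(\lambda-r+ks)$. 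I would also remark that, since $\h^r_s=(\h_s)^r$, the $\Leftarrow$ direction can alternatively be obtained by composing the two preceding propositions (the $s$-blow-up sends $\mu$ to $s\mu$, and adjoining additional vertices to the $ks$-graph $\h_s$ sends $s\mu$ to $s\mu+r-ks=\lambda$).
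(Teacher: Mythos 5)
Your proposal is correct and follows essentially the same route as the paper: the forward direction is the paper's computation (constant values on each $\varsigma_v$ and $\varsigma_e$ via Lemma \ref{teoadc}, the additional-vertex formula $w_e = s\,y(e)/(\lambda-r+ks)$, and the telescoping that yields $\mu=(\lambda+ks-r)/s$), while your converse construction $x_u=y_v$ on $\varsigma_v$, $x_u=y(e)/\mu$ on $\varsigma_e$ is exactly what the paper obtains by composing its two preceding propositions ($\mu\mapsto s\mu$ for $\h_s$, then $s\mu\mapsto s\mu+r-ks$ for $(\h_s)^r$), an alternative you yourself note. If anything, your write-up is slightly more careful than the paper's, since you explicitly check that the restricted vector $\y$ is nonzero.
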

\begin{proof}
If $\mu$ is a signless Laplacian eigenvalue of $\h$, then $s\mu$ is an
eigenvalue of $\Q(\h_s)$. So, $\lambda=s\mu+r-ks = s(\mu-k)+k$ is a signless
Laplacian eigenvalue of $(\h_s)^r=\h^r_s$.

Now, let $\x$ be an eigenvector associated with $\lambda$ in $\Q(\h^r_s)$. Thus,
	\begin{eqnarray*}
		\lambda x_u &=& \sum_{e^r_s\in E(\h^r_s)_{[u]}}x(e^r_s)\\
		&=& \sum_{e^r_s\in E(\h^r_s)_{[u]}}(r-ks)x_{i_e^1}+x(e_s)\\
		&=& \sum_{e_s\in E(\h_s)_{[u]}}(r-ks)\frac{x(e_s)}{\lambda-r+ks}+x(e_s)\\
		&=& \left( \frac{\lambda s}{\lambda+ks-r}\right)\sum_{e \in E(\h)_{[u]}}x(e).
	\end{eqnarray*}
	Therefore,
	$$\sum_{e \in E(\h)_{[u]}}x(e) = \frac{\lambda+ks-r}{s} x_u.$$
	That is, $\h$ has a signless Laplacian eigenvalue $\mu$, such that
	$$ \mu = \frac{\lambda+ks-r}{s}\quad \Rightarrow \quad \lambda= s(\mu-k)+r.$$
\end{proof}

We notice that Theorem \ref{teo:q-qr} characterizes all signless Laplacian
eigenvalues greater than $r-ks$ of a power hypergraph $\h^r_s$. Now we will
study the other eigenvalues.

\begin{Pro}\label{pro:multzerohs}
Let $\h$ be a $k$-graph. If $s \geq 1$ is an integer, then the multiplicity
of $\lambda = 0$ as eigenvalue of $\Q(\h_s)$ is $s|V|-t$. Where $t$ is the
rank of the matrix $\Q(\h)$.
\end{Pro}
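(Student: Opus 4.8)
The plan is to translate the multiplicity of the eigenvalue $0$ into a rank computation and then exploit the row structure of the incidence matrix of $\h_s$. Recalling that $\Q(\h_s)$ is symmetric and positive semi-definite, the multiplicity of $\lambda=0$ equals $\dim\ker\Q(\h_s) = |V(\h_s)| - \operatorname{rank}\Q(\h_s)$. Writing $\B_s$ for the incidence matrix of $\h_s$, we have $\Q(\h_s)=\B_s\B_s^T$, and over $\mathbb{R}$ one always has $\ker(\B_s\B_s^T)=\ker(\B_s^T)$ (if $\B_s\B_s^T\x=0$ then $\|\B_s^T\x\|^2=\x^T\B_s\B_s^T\x=0$, so $\B_s^T\x=0$), whence $\operatorname{rank}\Q(\h_s)=\operatorname{rank}\B_s$. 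Since $\h_s=\h^{ks}_s$ takes $r=ks$, each set $\varsigma_e$ is empty, so $\h_s$ has no additional vertices: it has exactly $s|V|$ vertices and the same $m$ edges as $\h$. Thus $\B_s$ is $s|V|\times m$ and the sought multiplicity is $s|V|-\operatorname{rank}\B_s$.

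The key step is to identify $\operatorname{rank}\B_s$ with $\operatorname{rank}\B=\operatorname{rank}\Q(\h)=t$. For this I would observe that a copy $v_i\in\varsigma_v$ lies in the edge $e_s$ precisely when $v\in e$; consequently the row of $\B_s$ indexed by any copy $v_i$ is identical to the row of $\B$ indexed by $v$. Hence $\B_s$ is obtained from $\B$ simply by repeating each of its $|V|$ rows $s$ times. Row repetition does not change the rank, so $\operatorname{rank}\B_s=\operatorname{rank}\B$; and since $\Q(\h)=\B\B^T$, the same identity $\operatorname{rank}(\B\B^T)=\operatorname{rank}\B$ gives $\operatorname{rank}\B=\operatorname{rank}\Q(\h)=t$. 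Combining, the multiplicity of $\lambda=0$ is $s|V|-t$, as claimed.

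I expect the only delicate point to be the bookkeeping that $\h_s=\h^{ks}_s$ introduces no additional vertices (so that $|V(\h_s)|=s|V|$ and the copies are the only vertices), after which the argument is entirely elementary. As an alternative that avoids the incidence matrix, one could describe the kernel directly via Lemma \ref{lem:caracZerox}: a vector $\x$ on $V(\h_s)$ satisfies $x(e_s)=0$ for every edge if and only if the block sums $y_v:=x(\varsigma_v)$ satisfy $y(e)=0$ for every $e\in E$, i.e. $\y\in\ker\Q(\h)$. Since the block-sum map $\mathbb{R}^{s|V|}\to\mathbb{R}^{|V|}$ is surjective with kernel of dimension $(s-1)|V|$, the preimage of the $(|V|-t)$-dimensional space $\ker\Q(\h)$ has dimension $(s-1)|V|+(|V|-t)=s|V|-t$, recovering the result.
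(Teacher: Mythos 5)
Your proposal is correct, and it takes a genuinely different route from the paper. The paper's proof is constructive: it lifts each kernel vector $\z$ of $\Q(\h)$ to $\h_s$ by replication ($x_v = z_u$ for $v \in \varsigma_u$), giving $|V|-t$ eigenvectors, then adds the $(s-1)|V|$ difference vectors supported on pairs $\{v, v_j\}$ inside each $\varsigma_v$, and argues the combined family is linearly independent. As written, that argument certifies only that the multiplicity is \emph{at least} $s|V|-t$; the matching upper bound (that there are no further kernel vectors) is left implicit. Your argument closes exactly this gap: the identities $\ker(\B_s\B_s^T)=\ker(\B_s^T)$ and $\operatorname{rank}\Q(\h)=\operatorname{rank}\B$, together with the observation that $\B_s$ is just $\B$ with each row repeated $s$ times (valid because $r=ks$ forces $\varsigma_e=\emptyset$, a bookkeeping point you rightly flag), compute $\operatorname{rank}\Q(\h_s)=t$ exactly, hence the multiplicity equals $s|V|-t$ with no separate upper-bound step. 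Your alternative via Lemma \ref{lem:caracZerox} is the same computation in kernel language, and it also explains the paper's two families structurally: under the block-sum map $\Sigma:\x\mapsto\bigl(x(\varsigma_v)\bigr)_{v\in V}$, the kernel of $\Q(\h_s)$ is $\Sigma^{-1}\bigl(\ker\Q(\h)\bigr)$, the paper's difference vectors span $\ker\Sigma$, and the lifted vectors form a complement of $\ker\Sigma$ mapping onto $\ker\Q(\h)$. In short, your proof is shorter, exact, and arguably more complete than the paper's.
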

\begin{proof}
If $\z$ is an eigenvector of $\lambda = 0$ in $\Q(\h)$, define a new vector
$\x$ of dimension $|V(\h_s)|$, by $x_v = z_u$ if $v \in \varsigma_u$. Notice that,
	$$(\Q(\h_s)\x)_v = \sum_{e_s \in E(\h_s)_{[v]}}x(e_s) =
s\!\!\!\!\!\!\sum_{e \in E(\h)_{[v]}}z(e) = 0.$$
Hence, for each eigenvector of $\lambda = 0$ in $\Q(\h)$, we build one
for $\h_s$,  i.e., we construct a family of $|V|-t$ linearly independent
eigenvectors.
	
Now, for each $v \in V(\h)$, suppose $\varsigma_v = \{v,v_2\ldots,v_s\}$ and
$2 \leq j \leq s$. We can construct the following family of $s-1$ linearly
independent vectors.
	$$\x^j = \begin{cases}(x^j)_{v}\; =\;\;\, 1,\\
	(x^j)_{v_j} = -1,\\
	(x^j)_{u}\; =\;\;\, 0, \textrm{ for } u \in V(\h_s)-\{v_1,v_j\}.\end{cases}$$
	
Notice that these vectors are eigenvectors of $\lambda = 0$ in $\Q(\h_s)$.
Repeating this construction for the other main vertices of $\h_s$, we obtain
$(s-1)|V|$ linearly independent eigenvectors. Observe that these vectors are
linearly independent from those constructed from the zero eigenvectors of the
base hypergraph $\h$. To see this. we observe that the former vectors have
constant sign in each $ \varsigma_u $, while these new vectors have more than
one sign in these sets. Therefore we have $s|V|-t$ linearly independent
eigenvectors of $\lambda=0$.
\end{proof}

\begin{Pro}
Let $\h$ be a $k$-graph. If $s \geq 1$ and $r > ks$ are two integers, then
the multiplicity of $\lambda = 0$ as eigenvalue of $\Q(\h^r_s)$ is at least
$(r-ks-1)|E|+s|V|$.
\end{Pro}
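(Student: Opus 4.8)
The plan is to exhibit explicitly a family of $s|V|+(r-ks-1)|E|$ linearly independent eigenvectors of $\Q(\h^r_s)$ associated with $\lambda=0$, invoking the characterization of Lemma~\ref{lem:caracZerox}: a vector $\x$ lies in the kernel of $\Q(\h^r_s)$ precisely when $x(e^r_s)=0$ for every edge $e^r_s\in E(\h^r_s)$. The structural feature I would exploit throughout is that, since $r>ks$, every edge $e^r_s$ contains at least one additional vertex, and the additional vertices of distinct edges are disjoint; this gives each edge a ``private'' coordinate that can be used to annihilate its sum independently of all the other edges.

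First I would build a family supported entirely on additional vertices. For each edge $e\in E(\h)$ write $\varsigma_e=\{v_e^1,\ldots,v_e^{r-ks}\}$, and for each $2\le j\le r-ks$ define the vector with a $1$ in coordinate $v_e^1$, a $-1$ in coordinate $v_e^j$, and zeros elsewhere. Each such vector has $x(e^r_s)=0$, since the two nonzero entries cancel in the edge $e$ while the coordinates of $\varsigma_e$ appear in no other edge; by Lemma~\ref{lem:caracZerox} it is a zero eigenvector. Across all edges this yields $(r-ks-1)|E|$ vectors, manifestly independent because the sets $\varsigma_e$ are pairwise disjoint.

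Next I would build a second family indexed by the $s|V|$ main and copy vertices. For such a vertex $p$, lying in some $\varsigma_v$, define the vector with a $1$ in coordinate $p$, a $-1$ in the coordinate $v_e^1$ for every edge $e$ of $\h$ containing $v$, and zeros elsewhere. For each edge $e\ni v$ the contribution $+1$ from $p$ is cancelled by the $-1$ placed in its private additional vertex, so again $x(e^r_s)=0$ and we obtain a zero eigenvector; this produces $s|V|$ further vectors.

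Finally I would verify that the two families together are linearly independent, which completes the count $s|V|+(r-ks-1)|E|$. The clean way to see this is to project onto the coordinates of the main and copy vertices: the first family vanishes there, while the second family restricts to the distinct standard unit vectors indexed by $p$. Hence any linear dependence forces all coefficients of the second family to vanish, after which the disjoint supports of the first family force the remaining coefficients to vanish as well. The only delicate point is precisely this separation argument, since the additional-vertex coordinates are shared between the two constructions; once one projects onto the main and copy coordinates the obstacle dissolves. Conceptually this is rank--nullity in disguise: the private additional vertices make the map $\x\mapsto(x(e^r_s))_e$ surjective, so its kernel has dimension $|V(\h^r_s)|-|E|=s|V|+(r-ks-1)|E|$, showing that the stated lower bound is in fact attained with equality.
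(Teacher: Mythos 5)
Your proposal is correct and follows essentially the same route as the paper: the paper constructs exactly the same two families --- the vectors supported on $\varsigma_e$ with a $+1$ and a $-1$ on two additional vertices of the same edge ($(r-ks-1)|E|$ of them), and, for each of the $s|V|$ main and copy vertices $w$, a vector with $+1$ at $w$ and $-1$ at one chosen additional vertex of each edge containing $w$ ($s|V|$ of them). Two points where you go beyond the paper are worth noting. First, the paper merely asserts that the second family is independent of the first, whereas you make this precise by projecting onto the main/copy coordinates; that is the right way to close the one delicate step, since the supports genuinely overlap on additional vertices. Second, your closing rank--nullity observation --- that the private additional vertices make $\x\mapsto(x(e^r_s))_{e}$ surjective, so $\dim\ker\Q(\h^r_s)=|V(\h^r_s)|-|E|=s|V|+(r-ks-1)|E|$ --- is strictly stronger than the proposition: it proves the multiplicity of $0$ is exactly this number, a fact the paper only records (without proof) in its final Remark. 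Indeed, that one-line argument, using $\ker(\B\B^T)=\ker(\B^T)$, could replace the entire explicit construction.
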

\begin{proof}
	Let $e \in E(\h)$ be an edge, suppose $\varsigma_e = \{u_1,\ldots,u_{r-ks}\}$ and $2 \leq j \leq r-ks$. Similarly to the proof of Proposition \ref{pro:multzerohs}, we can construct the following family of $r-ks-1$ linearly independent vectors.
	$$\y^j = \begin{cases}(y^j)_{u_1} =\;\;\, 1,\\
	(y^j)_{u_j} = -1,\\
	(y^j)_{u}\; =\;\;\, 0, \textrm{ for } u \in
V(\h^r_s)-\{u_1,u_j\}.\end{cases}$$ Repeating this construction for the other
edges of $\h$, we obtain $(r-ks-1)|E|$ linearly independent eigenvectors,
associated with $\lambda = 0$.
	
Now, let $w \in V(\h_s)$, and consider $e_1,\ldots,e_p$, all edges of $\h^r_s$
that contain the vertex $w$. For each of this, take $w_i\in e_i$ an additional
vertex. So we can build the vector
	$$\z = \begin{cases}z_{w} =\;\;\, 1,\\
	z_{w_i} = -1, \textrm{ for } 1 \leq i \leq p,\\
	z_{u}\; =\;\;\, 0, \textrm{ for } u \in V(\h^r_s)-\{w,w_1,\ldots,w_p\}.\end{cases}$$
Repeating this construct for the other vertices of $\h_s$, we obtain $s|V|$ eigenvectors associated with $\lambda = 0$, linearly independent to each other and with the others previously created. Totalizing $(r-ks-1)|E| + s|V|$ eigenvectors.
\end{proof}

\begin{Teo}
	Let $\h$ be a $k$-graph. If $s \geq 1$ and $r > ks$ are integers, then the multiplicity of $\lambda = r-ks$ as eigenvalue of $\Q(\h^r_s)$ is $|E|-t$. Where $t$ is the rank of the signless Laplacian matrix $\Q(\h)$.
\end{Teo}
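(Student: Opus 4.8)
The plan is to work directly with the eigenvalue equation $\Q(\h^r_s)\x=(r-ks)\x$ and to show that its solution space is canonically isomorphic to $\ker\B(\h)$, whose dimension is $|E|-t$. First I would use the symmetry of the construction: since $r>ks$ we have $\lambda=r-ks>0$, so Lemma~\ref{teoadc} applies. Any two additional vertices of a set $\varsigma_e$ lie in exactly the same (single) edge $e^r_s$, hence receive a common value $a_e$; and any two vertices of $\varsigma_v$ lie in exactly the edges $\{e^r_s:e\in E_{[v]}\}$, hence receive a common value $b_v$. Thus every eigenvector for $r-ks$ is encoded by a pair of vectors $a=(a_e)_{e\in E}$ and $b=(b_v)_{v\in V}$, and conversely any such constant data determines a vector on $V(\h^r_s)$.

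Next I would split the eigenvalue equation into the two vertex types, using $x(e^r_s)=(r-ks)a_e+s\,b(e)$ where $b(e)=\sum_{v\in e}b_v$. At an additional vertex, $(\Q\x)_u=x(e^r_s)$, so $\lambda a_e=(r-ks)a_e+s\,b(e)$ collapses at $\lambda=r-ks$ to $b(e)=0$ for every $e$, i.e.\ $\B^Tb=0$. At a copy vertex, $(\Q\x)_u=\sum_{e\in E_{[v]}}x(e^r_s)$, so using $b(e)=0$ and $\lambda>0$ the equation becomes $b_v=\sum_{e\in E_{[v]}}a_e$, i.e.\ $b=\B a$. Combining the two constraints gives $\B^T\B a=0$.

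The decisive step is to observe that $\B^T\B a=0$ forces $\B a=0$, because $0=a^T\B^T\B a=\|\B a\|^2$; hence $b=\B a=0$ and $a\in\ker\B$. So every eigenvector of $r-ks$ comes from some $a\in\ker\B$ with all copies set to $0$. Conversely, for any $a\in\ker\B$, assigning value $a_e$ to the additional vertices of $e$ and $0$ to all copies gives $x(e^r_s)=(r-ks)a_e$, and a one-line substitution confirms $\Q(\h^r_s)\x=(r-ks)\x$. The linear map $a\mapsto\x$ is injective (distinct $a$ differ on additional vertices), so the eigenspace is isomorphic to $\ker\B$, of dimension $|E|-\mathrm{rank}(\B)$. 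Finally $\mathrm{rank}(\B)=\mathrm{rank}(\B\B^T)=\mathrm{rank}(\Q(\h))=t$, since $\ker(\B\B^T)=\ker(\B^T)$ over $\mathbb{R}$, which yields the multiplicity $|E|-t$.

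The step I expect to be the main obstacle is precisely the collapse $\B^T\B a=0\Rightarrow\B a=0$: it is tempting to conclude only $a\in\ker(\B^T\B)$, but the content is that $\ker(\B^T\B)=\ker\B$ over $\mathbb{R}$, and this is exactly what forces the copies $b$ to vanish and pins the multiplicity at $\dim\ker\B$ rather than something larger. A smaller point requiring attention is the legitimacy of the value-constancy argument via Lemma~\ref{teoadc}, which needs $\lambda>0$; this is guaranteed by the hypothesis $r>ks$.
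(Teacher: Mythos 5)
Your proposal is correct, and it is in fact more complete than the paper's own argument, though the two share the same core construction. The paper proceeds in one direction only: it takes an eigenvector $\z$ of $\A_\lin(\h)$ for the eigenvalue $-k$ (equivalently, $\z\in\ker(\B^T\B)=\ker\B$, extracted by the same positive-semidefiniteness trick $\z^T\B^T\B\z=0\Rightarrow\B\z=0$ that you use), lifts it to the vector supported on additional vertices with value $z_e$ on $\varsigma_e$ and zero elsewhere --- exactly your converse map $a\mapsto\x$ --- and verifies the eigenvalue equation. That produces $|E|-t$ linearly independent eigenvectors, i.e.\ only the \emph{lower} bound on the multiplicity; read literally, the paper's proof never shows that every eigenvector of $r-ks$ arises this way, and the exactness is only recoverable from the dimension count implicit in the closing Remark of the section. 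Your proof supplies precisely this missing half: starting from an arbitrary eigenvector, Lemma~\ref{teoadc} (legitimately applied, since $r-ks>0$) forces block-constancy, the additional-vertex equations give $\B^Tb=0$, the copy-vertex equations give $b=\B a$, and the collapse $\B^T\B a=0\Rightarrow\B a=0$ pins the eigenspace as isomorphic to $\ker\B$, of dimension $|E|-\mathrm{rank}(\B)=|E|-t$. So what your route buys is a self-contained proof of the stated \emph{equality} of multiplicities, rather than the one-sided inequality the paper actually writes down; the cost is only the extra bookkeeping of the pair $(a,b)$.
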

\begin{proof}
	Firstly, note that $-k$ is an eigenvalue of multiplicity $|E|-t$ from $\A_\lin$. Let $\z = (z_{e_1},\ldots,z_{e_m})$ be a eigenvector of $-k$ in $\A_\lin$. Note that $$\sum_{v \in V}\left(\sum_{e \in E_{[v]}}z_e \right)^2 = \z^T\B^T\B\z = 0, \quad \Rightarrow \quad \sum_{e \in E_{[v]}}z_e = 0,\;\; \forall v \in V.$$
	
	Now, define a vector $\x$ of dimension $|V(\h^r_s)|$, by
	$$\x = \begin{cases}x_v = \,z_e, \textrm{ if } v \textrm{ is an adictional vertice of } e,\\
	x_v = \;0,\,\, \textrm{ if } v \textrm{ is not an adictional vertice.}\end{cases} $$
	If $u$ is an adictional vertice, then
	$$(\Q(\h^r_s)\x)_u = x(e) = (r-ks)(z_e) = (r-ks)x_u.$$
	If $u$ is a main or copy vertice, then
	$$(\Q(\h^r_s)\x)_u = \sum_{e \in E_{[u]}}x(e) = \sum_{e \in E_{[v]}}z_e = 0 = (r-ks)x_u.$$
	Therefore, the result follows.	
\end{proof}

\begin{Obs}
If $\h$ is a $k$-graph with $n$ vertices, $m$ edges having
signless Laplacian eigenvalues $\lambda_1\geq\lambda_2\geq\cdots\geq\lambda_t
>\lambda_{t+1}=\cdots=\lambda_n=0$, then
the eigenvalues of $\Q(\h^r_s)$ are $s(\lambda_1-k)+r, \ldots,
s(\lambda_t-k)+r$, $r-ks$ with multiplicity $m-t$ and  $0$ with
multiplicity $(r-ks-1)m+sn$.
\end{Obs}

\section*{Acknowledgments} This work is part of the
doctoral studies of K. Cardoso under the supervision of V.~Trevisan. K.
Cardoso is grateful for the support given by Instituto Federal do Rio Grande
do Sul (IFRS), Campus Feliz. V. Trevisan acknowledges partial support of CNPq grants
409746/2016-9 and 303334/2016-9, CAPES (Proj. MATHAMSUD 18-MATH-01) and
FAPERGS (Proj.\ PqG 17/2551-0001).

\bibliographystyle{acm}
\bibliography{Bibliografia}

\end{document}